\newtheorem{definition}{Definition}[section]
\newtheorem{theorem}[definition]{Theorem}
\newtheorem{lemma}[definition]{Lemma}
\newtheorem{proposition}[definition]{Proposition}
\theoremstyle{definition}
\newtheorem{remark}[definition]{Remark}
\newtheorem{note}[definition]{Note}
\newtheorem{example}[definition]{Example}
\newcommand{\diX}{\int^\oplus_{X}}
\newcommand{\dilX}{\int^{\oplus_\text{loc}}_{X_{\text{loc}}}}
\newcommand{\dmu}{\mathrm{d} \mu (p)}
\newcommand{\la}{\left\langle}
\newcommand{\ra}{\right\rangle}
\newcommand\up[1]{\mbox{\raisebox{1pt}{\ensuremath{#1}}}} 
\title[Direct Integral of Locally Hilbert Spaces]{Direct Integral of Locally Hilbert Spaces}
\author[Kulkarni]{Chaitanya J. Kulkarni}
\address{Chaitanya J. Kulkarni, Indian Institute of Science  Education and Research (IISER) Mohali, Knowledge City, S.A.S Nagar, Punjab 140306, India.}
\email{chaitanyakulkarni58@gmail.com}
\author[Pamula]{Santhosh Kumar Pamula}
\address{Santhosh Kumar Pamula, Indian Institute of Science  Education and Research (IISER) Mohali, Knowledge City, S.A.S Nagar, Punjab 140306, India.}
\email{santhoshkp@iisermohali.ac.in}
\subjclass{Primary (2020) 46A03; 46L05;  46A13; Secondary (2020) 47L40; 46A40}
\keywords{Direct Integrals, Locally Hilbert spaces,  Inductive limit, Projective limit, Locally von Neumann algebra.}
\date{}
\begin{document}

\maketitle

\begin{abstract}
In this work, we introduce the concept of the direct integral of locally Hilbert spaces. This notion is formulated such that the direct integral of locally Hilbert spaces forms a locally Hilbert space. We then define two classes of locally bounded operators on the direct integral of locally Hilbert spaces namely the class of decomposable locally bounded operators and the class of diagonalizable locally bounded operators.  We prove that the set of all decomposable locally bounded operators forms a locally von Neumann algebra, while the set of diagonalizable locally bounded operators forms an abelian von Neumann algebra, and we show that they are commutant of each other.
   

\end{abstract}


\section{Introduction and Preliminaries} \label{sec;Introduction and Preliminaries}

The concept of the direct integral of Hilbert spaces generalizes the notion of direct sum of Hilbert spaces. In the context of direct integrals, the discrete index set used in direct sum is replaced by a suitable measure space. The notion of the direct integral of Hilbert spaces is associated with an abelian von Neumann algebra, referred to as the algebra of diagonalizable operators. The concept of direct integral of Hilbert spaces is crucial in the study of a representation of a separable $C^\ast$-algebra by using irreducible representations, or in the study of a von Neumann algebra by using factors (see \cite{OB1, DixC, DixV, KR2, Tak1,  Wils2, Wils1} for more details on this topic). In \cite{BK1, BK2}, authors used the concept of direct integral of Hilbert spaces to study the collection of unital completely positive maps from a $C^\ast$-algebra $\mathcal{A}$ to $\mathcal{B}(\mathcal{H})$, the algebra of bounded operators on a Hilbert space $\mathcal{H}$.

The notion of a locally Hilbert space is defined via the inductive limit of some strictly inductive system of Hilbert spaces (see Definition \ref{def;lhs}). In this context, one may consider a strictly inductive system of direct integral of Hilbert spaces. This leads to a fundamental question: When taking the inductive limit can we interchange the inductive limit and direct integral? In this article, motivated by this fundamental question, we extend the concept of direct integrals to the setting of locally Hilbert spaces. We begin by proposing a definition of the direct integral of locally Hilbert spaces using the theory of inductive limits of strictly inductive systems of direct integral of Hilbert spaces (Definition \ref{Defn: directint_loc}). This definition generalizes the concept of the direct sum of locally Hilbert spaces. Moreover, in the special case where each locally Hilbert space is actually a Hilbert space, the notion of the direct integral of locally Hilbert spaces coincides with the classical definition of the direct integral of Hilbert spaces (see Remark \ref{rem; examples of dilhs particular cases}). A direct integral of locally Hilbert spaces is defined such that the resulting space is again a locally Hilbert space. This naturally leads to define subcollections of locally bounded operators that adhere to the structure of direct integral. So, in this framework, we introduce the classes of decomposable and diagonalizable locally bounded operators on direct integral of locally Hilbert spaces. We show that the class of decomposable locally bounded operators is a locally von Neumann algebra, which we see by realizing it as the projective limit of a projective system of von Neumann algebras. However, the class of diagonalizable locally bounded operators turns out to be an abelian von Neumann algebra. Moreover, under suitable assumptions, we prove that these two classes of operators are commutants of each other. Throughout this article, we use the concepts of inductive limit and projective limit of locally convex spaces along with the framework of direct integrals. For this, we follow the results presented in \cite{BGP, AD, AG, AI, MJ3} , as well as the results on the theory of locally von Neumann algebras discussed in \cite{MF, MJ1, MJ2}.

The article is organized into three sections. In Section  \ref{sec;Introduction and Preliminaries}, we review key definitions and results from the theory of direct integrals of Hilbert spaces, as well as from the theory of locally $C^\ast$-algebra and locally von Neumann algebras. In Section \ref{sec; Direct integrals}, we introduce the concept of the direct integral of locally Hilbert spaces. We prove that this concept preserves the underlying quantized domain structure (see Proposition \ref{prop; QD}). Further, we show that in the case of finite direct sum and in the case when each of the locally Hilbert space is a Hilbert space the notion of direct integral of locally Hilbert spaces coincides with the well established notions in the literature (refer Remark \ref{rem; examples of dilhs particular cases}). We also provide some examples of the direct integral of locally Hilbert spaces. In Section \ref{sec; Decomposable and Diagonalizable Locally Bounded Operators}, we define and study decomposable and diagonalizable locally bounded operators. We show that the space of diagonalizable locally bounded operators forms an abelian von Neumann algebra, while the space of decomposable locally bounded operators forms a locally von Neumann algebra. Finally, we examine the relationship between these two classes and prove that under certain assumptions they are commutant of each other.

\subsection{Direct integral of Hilbert spaces}
We recall a few definitions and results from the theory of direct integral of Hilbert spaces. The reader is directed to \cite{OB1, DixC, DixV, KR2, Tak1} for a comprehensive reading of this topic. Throughout this article, for terminology and notations related to the theory of direct integral of Hilbert spaces, we refer to Chapter 14 of \cite{KR2}. 

\begin{definition}\cite[Definition 14.1.1]{KR2} \label{def;dihs}
If $X$ is a $\sigma$-compact locally compact (Borel measure) space, $\mu$ is the completion of a (positive) Borel measure on $X$, and $\{ \mathcal{H}_p \}_{p \in X}$ is a family of separable Hilbert spaces indexed by points $p$ in $X$, we say that a separable Hilbert space $\mathcal{H}$ is the direct integral of $\{ \mathcal{H}_p \}$ over $(X, \mu)$ \Big(we write: $\mathcal{H} = \diX \mathcal{H}_p \, \dmu$ \Big) when, to each $x \in \mathcal{H}$, there corresponds a function $ p \mapsto x(p)$ on $X$ such that $x(p) \in \mathcal{H}_p$ for each $p$ and 
\begin{enumerate}
\item $p \mapsto \la x(p), y(p) \ra$ is $\mu$-integrable, when $x, y \in \mathcal{H}$ and
\begin{align*}
\la x, y \ra = \int_X \la x(p), y(p) \ra \, \dmu
\end{align*}
\item if $x_p \in \mathcal{H}_p$ for all $p$ in $X$ and $p \mapsto \la x_p, y(p) \ra$ is integrable for each $y \in \mathcal{H}$, then there is a $x \in \mathcal{H}$ such that 
$x(p) = x_p$ $\mu$-a.e. 
\end{enumerate}
We say that $\diX \mathcal{H}_p \, \dmu$ and $p \mapsto x(p)$ are the (direct integral) decompositions of $\mathcal{H}$ and $x$ respectively.
\end{definition}

\begin{note} \label{note; measure space}
Throughout this article, the notation $(X, \mu)$ will always refer to a $\sigma$-compact, locally compact space $X$ with a measure $\mu$, where $\mu$ is the completion of a positive Borel measure on $X$, unless stated otherwise. From now onwards, we call $(X, \mu)$ just as a measure space, without mentioning these properties.
\end{note}

Given a measure space $(X, \mu)$, we denote by $\text{L}^\infty(X, \mu)$ the space of all essentially bounded measurable functions from $X$ to $\mathbb{C}$. Next, we recall the definition of a decomposable and diagonalizable bounded operators on the direct integral of Hilbert spaces. 
\begin{definition}\cite[Definition 14.1.6]{KR2} \label{def;Debo}
Let $(X, \mu)$ be a measure space, and let  $\{ \mathcal{H}_p \}_{p \in X}$ be a family of separable Hilbert spaces with $\mathcal{H} = \diX \mathcal{H}_p \, \dmu$.
\begin{enumerate}
    \item \label{def;Decbo} An operator $T$ in $\mathcal{B}(\mathcal{H})$ is said to be {\it decomposable}, if there is a family  $\{ T_p \in \mathcal{B}(\mathcal{H}_p) \}_{p \in X}$ such that for each $x \in \mathcal{H}$, we have 
\begin{align*}
(Tx)(p) = T_p x(p) \; \; \; \text{for a.e.} \; \; p \in X.
\end{align*}
Subsequently, $T$ is denoted by $\int^\oplus_X T_p \, \dmu$. Moreover, the norm of $T$ is given by
\begin{equation} \label{eq;norm of T}
\| T \| = \text{ess sup} \big \{ \| T_p \| \; : \; p \in X \big \}.
\end{equation}
\item \label{def;Diagbo} An operator $T$ in $\mathcal{B}(\mathcal{H})$ is said to be {\it diagonalizable}, if $T$ is decomposable and there exists a function $f \in \text{L}^\infty(X, \mu)$ such that for each $x \in \mathcal{H}$, we have 
\begin{align*}
(Tx)(p) = f(p) x(p) \; \; \; \text{ for a.e.} \; \; p \in X.
\end{align*}
\end{enumerate} 
\end{definition}

The following theorem provides a structure on the set of all decomposable and the set of all diagonalizable operators, and describes the relationship between them. 

\begin{theorem}\cite[Theorem 14.1.10]{KR2} \label{thm;DeDibo vNA}
Let $\mathcal{H} = \diX \mathcal{H}_p \, \dmu$  be as in Definition \ref{def;Debo}. Then the set of all decomposable operators is a von Neumann algebra with the abelian commutant coinciding with the set of all diagonalizable operators.  
\end{theorem}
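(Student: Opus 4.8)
The plan is to establish this in two moves: first prove that the set of decomposable operators $\mathcal{D}$ is a von Neumann algebra by showing it equals its own double commutant (or, more efficiently, by exhibiting it as the commutant of the diagonalizable operators); second, identify the commutant of $\mathcal{D}$ with the diagonalizable operators $\mathcal{A}$, which is visibly abelian. Write $\mathcal{A} = \{ \int_X^\oplus f(p)I \, \dmu : f \in \mathrm{L}^\infty(X,\mu) \}$ for the diagonalizable operators. The cleanest route is to prove the single assertion $\mathcal{D} = \mathcal{A}'$; then $\mathcal{D}$ is automatically a von Neumann algebra (being a commutant), and taking commutants again gives $\mathcal{D}' = \mathcal{A}'' = \mathcal{A}$ (using that $\mathcal{A}$ is already weakly closed, which follows from $\mathrm{L}^\infty(X,\mu)$ being a von Neumann algebra acting on $\mathrm{L}^2$ and a standard transfer argument), so the commutant of $\mathcal{D}$ is $\mathcal{A}$, which is abelian since pointwise multiplication operators commute.

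The first inclusion $\mathcal{D} \subseteq \mathcal{A}'$ is a routine computation: if $T = \int_X^\oplus T_p \, \dmu$ and $S = \int_X^\oplus f(p)I \, \dmu$, then $(TSx)(p) = T_p f(p) x(p) = f(p) T_p x(p) = (STx)(p)$ a.e. for every $x \in \mathcal{H}$, so $TS = ST$. The substantive direction is $\mathcal{A}' \subseteq \mathcal{D}$: given $T \in \mathcal{B}(\mathcal{H})$ commuting with every diagonalizable operator, one must produce a measurable field $\{T_p\}$ with $(Tx)(p) = T_p x(p)$ a.e. The standard argument uses a countable generating family: fix a sequence $\{e_n\}$ in $\mathcal{H}$ whose sections $\{e_n(p)\}$ are dense in $\mathcal{H}_p$ for a.e. $p$ (available because $\mathcal{H}$ is separable and by the structure theory of direct integrals, e.g.\ Proposition 14.1.2 in \cite{KR2}). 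For $n$, the commutation relation $T(\chi_E e_n) = \chi_E (Te_n)$ for all Borel $E$ forces, via a Radon–Nikodym / localization argument, the existence of operators $T_p$ on the dense subspace spanned by $\{e_n(p)\}$ with $(Te_n)(p) = T_p e_n(p)$ a.e.; boundedness of $T$ (and the a.e.\ bound $\|T_p\| \le \|T\|$ obtained by testing against the $e_n$) lets one extend $T_p$ to all of $\mathcal{H}_p$, and a density argument in $n$ and in $\mathcal{H}$ upgrades the identity to all $x \in \mathcal{H}$. One then reads off the norm formula and closedness.

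The main obstacle is the measurable-selection bookkeeping in the direction $\mathcal{A}' \subseteq \mathcal{D}$: one must check that the pointwise-defined $T_p$ assemble into a genuine measurable operator field (so that $\int_X^\oplus T_p \, \dmu$ makes sense in the sense of Definition \ref{def;Debo}) and that all the "a.e." exceptional sets — one per pair $(e_n, e_m)$ and one per Borel set $E$ from a countable generating algebra — can be amalgamated into a single null set. Separability of $\mathcal{H}$ and $\sigma$-compactness of $X$ are exactly what make this amalgamation possible. Since this is the classical Theorem 14.1.10 of \cite{KR2}, in the write-up I would either cite it directly or, if a self-contained argument is wanted, follow the proof there: establish $\mathcal{D} = \mathcal{A}'$ as above, note $\mathcal{A}$ is a weakly closed abelian $\ast$-subalgebra, and conclude $\mathcal{D}' = \mathcal{A}'' = \mathcal{A}$ by the double commutant theorem.
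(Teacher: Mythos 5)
This statement is quoted background (Theorem 14.1.10 of \cite{KR2}); the paper gives no proof of its own but simply cites Kadison--Ringrose. Your outline---proving that the decomposable operators coincide with the commutant of the diagonalizable algebra, noting the latter is a weakly closed abelian von Neumann algebra isomorphic to $\mathrm{L}^\infty(X,\mu)$, and concluding by the double commutant theorem, with the measurable-field selection handled through a countable dense family and amalgamated null sets---is exactly the classical argument of that cited proof, so your proposal is correct and takes the same route.
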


In this work, our aim is to define suitable notions like direct integrals, decomposable operators, diagonaliazable operators, etc in locally Hilbert space setting. Before that, we turn our attention towards basic notations, terminologies and necessary concepts from the theory of locally Hilbert spaces, locally C*-algebras and locally von Neumann algebras. The following definitions are mainly drawn from \cite{AG, MJ1}, and for a detailed discussion on these topics, one can see \cite{BGP, AD, MF, AG, AG2, AG3, AI, MJ1, MJ2, MJ3, NCP}.

\subsection{Locally Hilbert space} 
A locally Hilbert space is the inductive limit of a strictly inductive system (or an upward filtered family) of Hilbert spaces. The formal definition is given below:

\begin{definition}\cite[Subsection 1.3]{AG} \label{def;sis}
Let $( \mathcal{H}_\alpha,\; \la \cdot, \cdot \ra_{\mathcal{H}_\alpha} )_{\alpha \in \Lambda}$ be a net of Hilbert spaces. Then $\mathcal{F} = \{\mathcal{H}_\alpha \}_{\alpha \in \Lambda}$ is said to be a strictly inductive system (or an upward filtered family) of Hilbert spaces if:
\begin{enumerate}
\item $(\Lambda, \leq)$ is a directed partially ordered set (poset);
\item for each $\alpha, \beta \in \Lambda$ with $\alpha \leq \beta \in \Lambda$ we have $\mathcal{H}_\alpha \subseteq \mathcal{H}_\beta$;
\item for each $\alpha, \beta \in \Lambda$ with $\alpha \leq \beta \in \Lambda$ the inclusion map $J_{\beta, \alpha} : \mathcal{H}_\alpha \rightarrow \mathcal{H}_\beta$ is isometric, that is,
$\la u, v \ra_{\mathcal{H}_\alpha} = \la u, v \ra_{\mathcal{H}_\beta}$ for all $u, v \in \mathcal{H}_\alpha$.
\end{enumerate}
\end{definition}

It is evident from Equation (1.13) of \cite{AG} that for a strictly inductive system $\mathcal{F} = \{\mathcal{H}_\alpha \}_{\alpha \in \Lambda}$ of Hilbert spaces, the inductive limit denoted by $\varinjlim\limits_{\alpha \in \Lambda} \mathcal{H}_\alpha$ exists, and it is given by
\begin{equation} \label{eq;lhs}
\varinjlim_{\alpha \in \Lambda} \mathcal{H}_\alpha = \bigcup_{\alpha \in \Lambda} \mathcal{H}_\alpha.
\end{equation}

\begin{definition}\cite[Subsection 1.3]{AG} \label{def;lhs}
A locally Hilbert space $\mathcal{D}$ is defined as the inductive limit of some strictly inductive system $\mathcal{F} = \{\mathcal{H}_\alpha \}_{\alpha \in \Lambda}$ of Hilbert spaces. 
\end{definition}

It is worth to point out that in the work of \cite{BGP, AD}, the authors used the term ``quantized domain" in place of locally Hilbert space. In particular, the article \cite{BGP} adopts the notation $\{\mathcal{H}; \mathcal{F}; \mathcal{D} \}$ to represent a quantized domain, where the Hilbert space $\mathcal{H}$ is the completion of the locally Hilbert space $\mathcal{D}$ given by the strictly inductive system $\mathcal{F}$. Whereas, in the work of \cite{MJ3}, the author used the notation $\mathcal{D}_\mathcal{F}$. However, throughout this work, we stick to the notation $\big \{ \mathcal{H}; \mathcal{F} = \{\mathcal{H}_\alpha\}_{\alpha \in \Lambda}; \mathcal{D} \big \}$ to represent a quantized domain, where the Hilbert space $\mathcal{H}$ is obtained by the completion of the locally Hilbert space $\mathcal{D}$ given by the strictly inductive system $\mathcal{F} = \{\mathcal{H}_\alpha\}_{\alpha \in \Lambda}$. Thus, we have 
\begin{equation*}
\mathcal{D} = \varinjlim_{\alpha \in \Lambda} \mathcal{H}_\alpha = \bigcup_{\alpha \in \Lambda} \mathcal{H}_\alpha \; \; \; \; \text{and} \; \; \; \; \overline{\mathcal{D}} = \mathcal{H}.
\end{equation*}
Moreover, if $\Lambda = \mathbb{N}$, then $\mathcal{E}$ is countable and in this case $\big \{ \mathcal{H}; \mathcal{F} = \{\mathcal{H}_\alpha\}_{\alpha \in \Lambda}; \mathcal{D} \big \}$ is called a \textit{quantized Fr\'echet domain}. For details, see Definition 2.3 of \cite{BGP}.

\begin{example} \label{ex;lhs}
Let $e_n$ be a sequence in $\ell^2(\mathbb{N})$, where the $n^\text{th}$ term of the sequence $e_n$ is 1 and 0 elsewhere. Then $\{e_{n}: n \in \mathbb{N}\}$ is a Hilbert basis of $\ell^2(\mathbb{N}).$ For each $k \in \mathbb{N}$, define the closed (in fact, finite dimensional) subspace  $\mathcal{H}_k := \text{span} \{ e_1, e_2,...,e_k \}$. It follows that $\mathcal{H}_{m} \subseteq \mathcal{H}_{n}$ for $m \leq n$. In other words, the family $\mathcal{F} = \big \{\mathcal{H}_n \big \}_{n \in \mathbb{N}}$ forms a strictly inductive system of Hilbert spaces. The inductive limit $\mathcal{D}$ of the strictly inductive system $\mathcal{F} = \big \{\mathcal{H}_n \big \}_{n \in \mathbb{N}}$ is given by $\mathcal{D} = \varinjlim\limits_{n \in \mathbb{N}} \mathcal{H}_n = \bigcup\limits_{n \in \mathbb{N}} \mathcal{H}_n$, and thus
\begin{align*}
\mathcal{D} = \big \{ x = (x_1, x_2, \cdots, ) \in \ell^2(\mathbb{N}) \; : \; x_n =0 \;\text{for all but finitely many}  \; n \in \mathbb{N} \big \}.
\end{align*}
Here $\mathcal{D}$ is the locally Hilbert space and $\overline{\mathcal{D}} = \ell^2(\mathbb{N})$. In other words, $\big \{ \ell^2(\mathbb{N}), \mathcal{F} = \{\mathcal{H}_n\}_{n \in \mathbb{N}}, \mathcal{D} \big \}$ is a quantized Frechet domain. For a more general construction, one can see Example 2.9 of \cite{BGP}. 
\end{example}

In the remaining of this Section, without mentioning explicitly, we assume that $(\Lambda, \leq)$ is a directed poset. Now, we recall the notion of a locally bounded operator between locally Hilbert spaces. Let $\big \{ \mathcal{H}; \mathcal{F} = \{\mathcal{H}_\alpha\}_{\alpha \in \Lambda}; \mathcal{D} \big \}$  and $\big \{ \mathcal{K}; \mathcal{G} = \{\mathcal{K}_\alpha\}_{\alpha \in \Lambda}; \mathcal{O} \big \}$  be two quantized domains. Consider the families $\{ P_\alpha : \mathcal{H} \rightarrow \mathcal{H}_\alpha \}_{\alpha \in \Lambda}$ and $\{ Q_\alpha : \mathcal{K} \rightarrow \mathcal{K}_\alpha\}_{\alpha \in \Lambda}$ of orthogonal projections. Suppose $\mathcal{L}(\mathcal{D}, \mathcal{O})$ denotes the collection of all densely defined (in $\mathcal{H}$) operators from $\mathcal{D}$ to $\mathcal{O}$, and in particular, $\mathcal{L}(\mathcal{D}, \mathcal{D}) = \mathcal{L}(\mathcal{D})$. Let us consider a subclass of $\mathcal{L}(\mathcal{D}, \mathcal{O})$ defined as   
\begin{equation*}
C_{\mathcal{F}, \mathcal{G}}(\mathcal{D}, \mathcal{O}) := \big \{ T \in \mathcal{L}(\mathcal{D}, \mathcal{O}) \; \; : \; \; TP_\alpha = Q_\alpha T P_\alpha \in \mathcal{B}(\mathcal{H}, \mathcal{K}) \; \; \text{for each} \; \alpha \in \Lambda \big \}.
\end{equation*}
Note that, if $T \in \mathcal{L}(\mathcal{D}, \mathcal{O})$, then $T \in C_{\mathcal{F}, \mathcal{G}}(\mathcal{D}, \mathcal{O})$ if and only if $T(\mathcal{H}_{\alpha}) \subseteq \mathcal{K}_{\alpha}$ and $T\big|_{\mathcal{H}_{\alpha}} \in \mathcal{B}(\mathcal{H}_{\alpha}, \mathcal{K}_\alpha)$ for each $\alpha \in \Lambda$ (refer \cite[Section 2.2]{GIS}). In particular, $C_{\mathcal{F}, \mathcal{F}}(\mathcal{D}, \mathcal{D}) = C_{\mathcal{F}}(\mathcal{D})$ and this forms an algebra of all non commutative continuous functions on the locally Hilbert space $\mathcal{D}$ (see \cite[Equation (3.1)]{AD}).  Consider a subclass of $C_{\mathcal{F}, \mathcal{G}}(\mathcal{D}, \mathcal{O})$ given by
\begin{equation*}
C^\ast_{\mathcal{F}, \mathcal{G}}(\mathcal{D}, \mathcal{O}) := \big \{ T \in C_{\mathcal{F}, \mathcal{G}}(\mathcal{D}, \mathcal{O}) \; \; : \; \; Q_\alpha T \subseteq T P_\alpha \; \; \text{for each} \; \alpha \in \Lambda \big \}.    
\end{equation*}
One may note from \cite[Section 2.1]{BGP} that  an operator $T \in \mathcal{L}(\mathcal{D}, \mathcal{O})$ belongs to $C^\ast_{\mathcal{F}, \mathcal{G}}(\mathcal{D}, \mathcal{O})$ if and only if $T(\mathcal{H}_{\alpha}) \subseteq \mathcal{K}_{\alpha}$,  $T(\mathcal{H}_{\alpha}^{\bot}\cap \mathcal{D}) \subseteq \mathcal{K}_{\alpha}^{\bot}\cap \mathcal{O}$ and  $T\big|_{\mathcal{H}_{\alpha}} \in \mathcal{B}(\mathcal{H}_{\alpha}, \mathcal{K}_\alpha)$ for each $\alpha \in \Lambda$ (also, see \cite[Section 2.2]{GIS}).

\begin{definition}\cite[Section 2.2]{GIS} \label{def;lbo}
Let  $\big \{ \mathcal{H}; \mathcal{F} = \{\mathcal{H}_\alpha\}_{\alpha \in \Lambda}; \mathcal{D} \big \}$  and $\big \{ \mathcal{K}; \mathcal{G} = \{\mathcal{K}_\alpha\}_{\alpha \in \Lambda}; \mathcal{O} \big \}$  be two quantized domains. Then an operator $T \in \mathcal{L}(\mathcal{D}, \mathcal{O})$ is said to be locally bounded if $T \in C^\ast_{\mathcal{F}, \mathcal{G}}(\mathcal{D}, \mathcal{O})$.
\end{definition}

Now, we give an attention to $C^\ast_{\mathcal{F}, \mathcal{F}}(\mathcal{D}, \mathcal{D}) = C^\ast_{\mathcal{F}}(\mathcal{D})$ which can be seen as a (proper) subclass of $\mathcal{L}(\mathcal{D})$. Suppose $T \in C^\ast_\mathcal{F}(\mathcal{D})$, then $T$ has an unbounded dual $T^{\bigstar}$ satisfying  \begin{equation*}
\mathcal{D} \subseteq \text{dom}(T^{\bigstar}), \;  \;  \; T^{\bigstar}(\mathcal{D}) \subseteq \mathcal{D}.
\end{equation*}
If we define $T^\ast := T^\bigstar\big|_{\mathcal{D}}$, then it follows that the correspondence $T \mapsto T^\ast$ is an involution on $C^\ast_\mathcal{F}(\mathcal{D})$. Conversely, if $T \in \mathcal{L}(\mathcal{D})$  with
\begin{equation} \label{eqn; locally bounded operator on D}
T(\mathcal{H}_\alpha) \subseteq \mathcal{H}_\alpha; \; \; \; T^\ast(\mathcal{H}_\alpha) \subseteq \mathcal{H}_\alpha; \; \; \; T\big|_{\mathcal{H}_\alpha}, T^\ast\big|_{\mathcal{H}_\alpha} \in \mathcal{B}(\mathcal{H}_\alpha) \; \; \text{for each} \; \; \alpha \in \Lambda,
\end{equation}
then $T \in C^\ast_\mathcal{F}(\mathcal{D})$ \big(see \cite[Proposition 3.1]{AD} \big). As a result $C^\ast_\mathcal{F}(\mathcal{D})$ is unital $\ast$-algebra. Further, 

It is worth to point out that an operator $T \in C^\ast_\mathcal{F}(\mathcal{D})$ when considered as an unbounded operator in $\mathcal{H}$ with the domain $\mathcal{D}$, it is not necessarily closed (see Example \ref{ex;lbo} below). However, every operator in $C^\ast_\mathcal{F}(\mathcal{D})$ is closable. This is because the unbounded dual $T^\bigstar$ is densely defined in $\mathcal{H}$, whenever $T \in C^\ast_\mathcal{F}(\mathcal{D})$.

\begin{example} \label{ex;lbo}
Let $\big \{ \ell^2(\mathbb{N}), \mathcal{F} = \{\mathcal{H}_n\}_{n \in \mathbb{N}}, \mathcal{D} \big \}$ be the quantized Fr\'echet domain as given in Example \ref{ex;lhs}. Now, consider the linear operator $S$ in $\ell^2(\mathbb{N})$ given by the matrix 
\begin{align*}
S= \begin{bmatrix}
1      & 0      & 0      & \cdots & 0      & \cdots \\
0      & 2      & 0      & \cdots & 0      & \cdots \\
0      & 0      & 3      & \cdots & 0      & \cdots \\
\vdots & \vdots & \vdots & \ddots & \vdots & \vdots\\
0      & 0      & 0      & \cdots & m      &  \cdots\\
0      & 0      & 0      & \cdots & 0      & \ddots
\end{bmatrix}.
\end{align*}
with the domain 
\begin{align*}
\text{dom}(S) = \Big \{ \big \{x_n \big\}_{n \in \mathbb{N}} \in \ell^2(\mathbb{N}) \;\; \big| \;\; \sum^\infty_{n =1} n^2 | x_n |^2 < \infty \Big \}.
\end{align*}
Since the locally Hilbert space $\mathcal{D}$ consists of sequences with finite support, this gives the inclusion $\mathcal{D} \subset \text{dom}(S)$. Here the inclusion is proper because the sequence $\big \{\frac{1}{n^2} \big\}_{n \in \mathbb{N}} \in \text{dom}(S) \setminus \mathcal{D}$. Since for each $n \in \mathbb{N}$, we have $Se_n = n e_n$, we obtain $S(\mathcal{H}_n) \subseteq \mathcal{H}_n$ and $S(\mathcal{H}_{n}^{\bot}\cap \mathcal{D}) \subseteq \mathcal{H}_{n}^{\bot}\cap \mathcal{D}$. Moreover, $\big \| S\big|_{\mathcal{H}_n} \big \| = n$ for all $n \in \mathbb{N}$, that is, $S\big|_{\mathcal{H}_n} \in \mathcal{B}(\mathcal{H}_n)$. It follows from Equation \eqref{eqn; locally bounded operator on D} that $T := S\big|_{\mathcal{D}} \in C^\ast_\mathcal{F}(\mathcal{D})$. Equivalently, $T$ is a locally bounded operator on $\mathcal{D}$. 

On the other hand, $T$ can be seen as a densely defined unbounded operator in $\ell^2(\mathbb{N})$ with the domain $\mathcal{D}$. 
However, $T$ is not a closed operator. It can be seen by considering the sequence $\{ Y_n \}_{n \in \mathbb{N}} \subseteq \mathcal{D}$, where
\begin{equation*}
Y_n = \Big \{1, \frac{1}{2^3}, \frac{1}{3^3}, \cdots, \frac{1}{n^3}, 0, 0, \cdots \Big\}, \; \; \; \text{for each} \; \; n \in \mathbb{N}.
\end{equation*}
Then 
\begin{equation*}
T(Y_n) = \big \{1, \frac{1}{2^2}, \frac{1}{3^2}, \cdots, \frac{1}{n^2}, 0, 0, \cdots \big\}, \; \; \; \text{for each} \; \; n \in \mathbb{N}.
\end{equation*}
Now let us consider $Y := \big \{1, \frac{1}{2^3}, \frac{1}{3^3}, \cdots, \frac{1}{n^3}, \frac{1}{{(n+1)}^3}, \cdots \big\}_{n \in \mathbb{N}}$ and $Z := \big \{1, \frac{1}{2^2}, \frac{1}{3^2}, \cdots, \frac{1}{n^2}, \frac{1}{{(n+1)}^2}, \cdots \big\}_{n \in \mathbb{N}}$  in $\ell^2(\mathbb{N})$. Then note that 
\begin{equation*}
\big \|  Y_n - Y \big \|_{\ell^2(\mathbb{N})} \rightarrow 0 \; \; \; \text{and} \; \; \; \big \|  T(Y_n) - Z \big \|_{\ell^2(\mathbb{N})} \rightarrow 0 \; \; \text{as} \; \; n \rightarrow \infty.
\end{equation*}
Since $Y \notin \mathcal{D}$, we conclude that $T$ is not a closed operator.
\end{example}

\begin{remark}\cite[Subection 1.4]{AG} \label{rem;lbo}
Let  $\big \{ \mathcal{H}; \mathcal{F} = \{\mathcal{H}_\alpha\}_{\alpha \in \Lambda}; \mathcal{D} \big \}$  and $\big \{ \mathcal{K}; \mathcal{G} = \{\mathcal{K}_\alpha\}_{\alpha \in \Lambda}; \mathcal{O} \big \}$  be two quantized domains. Consider $T \in C^*_{\mathcal{F}, \mathcal{G}}(\mathcal{D}, \mathcal{O})$, and for each $\alpha \in \Lambda$ take 
$T_\alpha := T\big|_{\mathcal{H}_{\alpha}}$. Then $T_\alpha = Q_{\mathcal{K}_\alpha}T\big|_{\mathcal{H}_{\alpha}}$, where $Q_{\mathcal{K}_\alpha}$ denotes the orthogonal projection of $\mathcal{O}$ to $\mathcal{K}_\alpha$ (see \cite[Lemma 3.1]{AG3} for the existence of such projection). For a fixed $\alpha \in \Lambda$, we denote the inclusion maps by the notations $J_{\mathcal{D}, \alpha} : \mathcal{H}_\alpha \rightarrow \mathcal{D}$ and $J_{\mathcal{O}, \alpha} : \mathcal{K}_\alpha \rightarrow \mathcal{O}$. Then the collection $\{ T_\alpha \}_{\alpha \in \Lambda}$ satisfies the following properties:
\begin{enumerate}
\item for each $\alpha \in \Lambda$, $T_\alpha \in \mathcal{B}(\mathcal{H}_\alpha, \mathcal{K}_\alpha)$ and $T J_{\mathcal{D}, \alpha} = J_{\mathcal{O}, \alpha} T_\alpha$;
\item for $\alpha, \beta \in \Lambda$ with $\alpha \leq \beta$, we get $T^*_\beta\big|_{\mathcal{K}_{\alpha}} = T^*_\alpha$.
\end{enumerate}
\end{remark}
In view of Remark \ref{rem;lbo} and following the notations of \cite{AG}, we say that every $T \in C^{\ast}_{\mathcal{F}}(\mathcal{D})$ can be seen as a projective (or inverse) limit of the net $\{T_{\alpha}\}_{\alpha \in \Lambda}$ of bounded operators. That is, 
\begin{equation} \label{eq; inverese limit of bounded operators}
    T = \varprojlim\limits_{\alpha \in \Lambda} T_{\alpha}.
\end{equation}

Now we are in a position to discuss the notion of locally $C^{\ast}$-algebra. For a comprehensive study of such algebras and local completely positive maps, one can see \cite{BGP, AD, AG, AG2, AI, MJ1, MJ2, MJ3, NCP} and references therein. 
\subsection{Locally $C^{\ast}$-algebra} Let $\mathcal{A}$ be a unital $\ast$-algebra. A seminorm $p$ on $\mathcal{A}$ is said to be a $C^{\ast}$-seminorm, if 
\begin{equation*}
p(a^\ast) = p(a), \; \; \; \; p(a^*a) = p(a)^2, \; \; \; \; p(ab) \leq p(a) p(b),
\end{equation*}
\noindent for all $a,b \in \mathcal{A}$. It is important to note that in \cite{ZS}, the author proved that the condition $p(a^*a) = p(a)^2$ implies $p(ab) \leq p(a) p(b)$ for all $a,b \in \mathcal{A}$. Let $\mathcal{P} := \{ p_\alpha \;  : \;   \alpha \in \Lambda \}$ be a family of $C^{\ast}$-seminorms defined on a the $\ast$-algebra $\mathcal{A}$. Then $\mathcal{P}$ is called a upward filtered family if for each $a \in \mathcal{A}$, we have  $p_\alpha(a) \leq p_\beta(a)$, whenever $\alpha \leq \beta$.

\begin{definition} \cite{AI} \label{def;lca}
A $*$-algebra $\mathcal{A}$ which is complete with respect to the locally convex topology generated by an upward filtered family $\{ p_\alpha \;  : \;  \alpha \in \Lambda \}$ of C*-seminorms is called a locally $C^{\ast}$-algebra. Further, $\mathcal{A}$ is called a unital $*$-algebra, if $\mathcal{A}$ has unit.
\end{definition}

It is well known that every locally $C^{\ast}$-algebra can be realized as the projective limit (or inverse limit) of a projective system (or inverse system) of $C^{\ast}$-algebras. The construction of such projective system is given in \cite{BGP, AG, MJ1, NCP}. However, we recall a few points here: Let $\mathcal{A}$ be a locally $C^{\ast}$-algebra. Then for each $\alpha \in \Lambda$, take $\mathcal{I}_\alpha := \{ a \in \mathcal{A} \; : \; p_\alpha(a) = 0 \}$, which is a two-sided closed ideal in $\mathcal{A}$, then $\mathcal{A}_\alpha : = {\mathcal{A}}\!/\!{\mathcal{I}_\alpha}$ is a $C^{\ast}$-algebra with respect to the $C^{\ast}$-norm induced by $p_\alpha$ (refer \cite{CA, NCP} for more details). Whenever $\alpha \leq \beta$, since $P_\alpha(a) \leq p_\beta(a)$ for each $a \in \mathcal{A}$, there is a $C^\ast$-homomorphism (surjective) $\pi_{\alpha, \beta} : \mathcal{A}_\beta \rightarrow \mathcal{A}_\alpha$ given by $\pi_{\alpha, \beta}(a + \mathcal{I}_\beta) = a + \mathcal{I}_\alpha$. This shows that $\left ( \{ \mathcal{A}_\alpha \}_{\alpha \in \Lambda},  \{ \pi_{\alpha, \beta} \}_{\alpha \leq \beta}\right )$ forms a projective system of $C^{\ast}$-algebras. Then we consider the projective limit of the projective system $\left ( \{ \mathcal{A}_\alpha \}_{\alpha \in \Lambda},  \{ \pi_{\alpha, \beta} \}_{\alpha \leq \beta} \right )$, which is given by \big (refer Subsection 1.1 of \cite{AG} \big)
\begin{align}\label{eq;pl}
\varprojlim\limits_{\alpha \in \Lambda} \mathcal{A}_\alpha := \left \{ \{x_\alpha\}_{\alpha \in \Lambda} \in \prod_{\alpha \in \Lambda} \mathcal{A}_\alpha ~~ : ~~ \pi_{\alpha, \beta}(x_\beta) = x_\alpha, ~~ \text{whenever} ~~ \alpha \leq \beta \right \},
\end{align}
which is equipped with the weakest locally convex topology that makes each linear map $\pi_\beta : \varprojlim\limits_{\alpha \in \Lambda} \mathcal{A}_\alpha \rightarrow \mathcal{A}_\beta$ defined by $\pi_\beta(\{x_\alpha\}_{\alpha \in \Lambda}) := x_\beta$ is a continuous $\ast$-homomorphism. This topology is known as the projective limit topology. Since $\mathcal{A}_\alpha$ is complete for each $\alpha \in \Lambda$, the projective limit $\varprojlim\limits_{\alpha \in \Lambda} \mathcal{A}_\alpha$ is complete with respect to the projective limit topology \big (see Subsection 1.1 of \cite{AG} \big ). Moreover, the pair $\big(\varprojlim\limits_{\alpha \in \Lambda} \mathcal{A}_\alpha, \{ \pi_\alpha \}_{\alpha \in \Lambda} \big)$ is compatible with the projective system $\left ( \{ \mathcal{A}_\alpha \}_{\alpha \in \Lambda},  \{ \pi_{\alpha, \beta} \}_{\alpha \leq \beta} \right )$, that is, $\pi_{\alpha, \beta} \circ \pi_\beta = \pi_\alpha$, whenever  $\alpha \leq \beta$. 

Let $\mathcal{W}$ be a locally convex $\ast$-algebra along with the net of continuous $\ast$-homomorphisms given by $\big \{\psi_\alpha : \mathcal{W} \rightarrow \mathcal{A}_\alpha \big\}_{\alpha \in \Lambda}$. If the pair $(\mathcal{W}, \{ \psi_\alpha \}_{\alpha \in \Lambda})$ is compatible with the projective system $\left ( \{ \mathcal{A}_\alpha \}_{\alpha \in \Lambda},  \{ \pi_{\alpha, \beta} \}_{\alpha \leq \beta} \right )$, that is, 
\begin{equation}\label{eq;ps}
\pi_{\alpha, \beta} \circ \psi_\beta = \psi_\alpha    \; \text{whenever} \; \alpha \leq \beta, 
\end{equation}
then there always exists a unique continuous linear map $\Psi : \mathcal{W} \rightarrow \varprojlim\limits_{\alpha \in \Lambda} \mathcal{A}_\alpha$ satisfying $\psi_\alpha = \pi_\alpha \circ \Psi$ for each $\alpha \in \Lambda$.  In this sense, the projective limit $\big (\varprojlim\limits_{\alpha \in \Lambda} \mathcal{A}_\alpha, \{ \pi_\alpha \}_{\alpha \in \Lambda} \big)$ of the projective system $\left ( \{ \mathcal{A}_\alpha \}_{\alpha \in \Lambda},  \{ \pi_{\alpha, \beta} \}_{\alpha \leq \beta} \right )$ is uniquely determined. A reader is directed to \cite{AG, NCP} for more details.

Now observe that for each $\alpha \in \Lambda$, there is a canonical projection map $\phi_\alpha : \mathcal{A} \rightarrow \mathcal{A}_\alpha$  satisfying $\pi_{\alpha, \beta} \circ \phi_\beta = \phi_\alpha$, whenever $\alpha \leq \beta$. Equivalently, we get that the pair $(\mathcal{A}, \{ \phi_\alpha \}_{\alpha \in \Lambda})$ is compatible with the projective system  $\left ( \{ \mathcal{A}_\alpha \}_{\alpha \in \Lambda},  \{ \pi_{\alpha, \beta} \}_{\alpha \leq \beta} \right )$ \big (see Subsection 1.1 of \cite{AG} \big ). Moreover, 
the projective limit $\big(\varprojlim\limits_{\alpha \in \Lambda} \mathcal{A}_\alpha, \{ \phi_\alpha \}_{\alpha \in \Lambda} \big)$ of the projective system $\left ( \{ \mathcal{A}_\alpha \}_{\alpha \in \Lambda},  \{ \pi_{\alpha, \beta} \}_{\alpha \leq \beta} \right )$ can be identified with the locally $C^\ast$-algebra $\mathcal{A}$ and the identification is given by the continuous  bijective $\ast$-homomorphism $\phi : \mathcal{A} \rightarrow \varprojlim\limits_{\alpha \in \Lambda} \mathcal{A}_\alpha$ defined by $a \mapsto \{a + \mathcal{I}_\alpha\}_{\alpha \in \Lambda}$. 
By referring to the identification given by the map $\phi$, we write $\mathcal{A} = \varprojlim\limits_{\alpha \in \Lambda} \mathcal{A}_\alpha$. One may refer to \cite{AG, NCP} and references therein for more details.


\begin{remark} \label{rem;pl,sh} \cite{KS}
Let $\left ( \{ \mathcal{B}_\alpha \}_{\alpha \in \Lambda},  \{ \psi_{\alpha, \beta} \}_{\alpha \leq \beta} \right )$ 
be a projective system of $C^\ast$-algebras. Then the projective limit $\varprojlim\limits_{\alpha \in \Lambda} \mathcal{B}_\alpha$ is a locally $C^\ast$-algebra  and it is uniquely determined by defining as in Equation \eqref{eq;pl} 
\begin{equation*}
\varprojlim\limits_{\alpha \in \Lambda} \mathcal{B}_\alpha := \left \{ \{x_\alpha\}_{\alpha \in \Lambda} \in \prod_{\alpha \in \Lambda} \mathcal{B}_\alpha ~~ : ~~ \psi_{\alpha, \beta}(x_\beta) = x_\alpha, ~~ \text{whenever} ~~ \alpha \leq \beta \right \},
\end{equation*}
along with the continuous $\ast$-homomorphisms $\pi_\beta : \varprojlim\limits_{\alpha \in \Lambda} \mathcal{B}_\alpha \rightarrow \mathcal{B}_\beta$ defined as $\pi_\beta \big  (\{ x_\alpha \}_{\alpha \in \Lambda} \big  ) := x_\beta$ for every $\beta \in \Lambda$. 
\end{remark}

\begin{example} \cite[(1) of Example 1.4]{AG}\label{ex;lca}
Let  $\big \{ \mathcal{H}, \mathcal{F} = \{\mathcal{H}_\alpha\}_{\alpha \in \Lambda}, \mathcal{D} \big \}$ be a quantized domain. For each fixed $\beta \in \Lambda$, define the branch of $\Lambda$ determined by $\beta$ as $\Lambda_\beta = \{ \alpha \in \Lambda \;  :  \; \alpha \leq \beta \}$ . Then, with the induced order from $(\Lambda, \leq)$ the set $(\Lambda_\beta, \leq)$ is a directed poset (see \cite[Section 1.4]{AG}). For each $\beta \in \Lambda$ consider the quantized domain $\big \{ \mathcal{H}_\beta, \mathcal{F}_\beta = \{\mathcal{H}_\alpha\}_{\alpha \in \Lambda_\beta}, \mathcal{H}_\beta \big \}$. Then the $\ast$-algebra $C^*_{\mathcal{F}_\beta}(\mathcal{H}_\beta)$ is a $C^\ast$-subalgebra of $\mathcal{B}(\mathcal{H}_\beta)$ (see \cite[(1) of Example 1.4]{AG}). Now for $\alpha \leq \beta$, define a $\ast$-homomorphism 
\begin{align*}
\phi_{\alpha, \beta} : C^*_{\mathcal{F}_\beta}(\mathcal{H}_\beta) \rightarrow C^*_{\mathcal{F}_\alpha}(\mathcal{H}_\alpha) \; \; \; \text{as} \; \;  \; \phi_{\alpha, \beta}(S) = S \big|_{\mathcal{H}_\alpha},
\end{align*}
for every $S \in C^*_{\mathcal{F}_\beta}(\mathcal{H}_\beta)$. Then $\left ( \big \{ C^*_{\mathcal{F}_\alpha}(\mathcal{H}_\alpha) \big \}_{\alpha \in \Lambda},  \big \{\phi_{\alpha, \beta} \big \}_{\alpha \leq \beta} \right )$ forms a projective system of $C^{\ast}$-algebras. Now let $T \in C^*_\mathcal{F}(\mathcal{D})$. By Equation \eqref{eq; inverese limit of bounded operators} and Definition \ref{def;lbo}, we have $T = \varprojlim\limits_{\alpha \in \Lambda} T_\alpha$, where  $T_\alpha \in \mathcal{B}(\mathcal{H}_\alpha)$ and in particular $T_\alpha \in C^*_{\mathcal{F}_\alpha}(\mathcal{H}_\alpha)$. For each $\alpha \in  \Lambda$, define a surjective $\ast$-homomorphism  $\phi_\alpha : C^*_\mathcal{F}(\mathcal{D}) \rightarrow C^*_{\mathcal{F}_\alpha}(\mathcal{H}_\alpha)$ by  $\phi_\alpha(T) := T_\alpha$. Then we have $\phi_{\alpha, \beta} \circ \phi_\beta = \phi_\alpha$, whenever $\alpha \leq \beta$. This shows that the pair $ \big ( C^*_\mathcal{F}(\mathcal{D}), \{ \phi_\alpha \}_{\alpha \in \Lambda} \big )$ is compatible with the projective system given by $\left ( \big \{ C^*_{\mathcal{F}_\alpha}(\mathcal{H}_\alpha) \big \}_{\alpha \in \Lambda},  \big \{\phi_{\alpha, \beta} \big \}_{\alpha \leq \beta} \right )$ \big (see Subsection 1.1 of \cite{AG} \big ). Further, the pair $\big (C^*_\mathcal{F}(\mathcal{D}), \{ \phi_\alpha \}_{\alpha \in \Lambda} \big )$ satisfies the universal property. This means let $\mathcal{W}$ be a locally convex $\ast$-algebra along with a net of continuous $\ast$-homomorphisms $\big \{\psi_\alpha : \mathcal{W} \rightarrow C^*_{\mathcal{F}_\alpha}(\mathcal{H}_\alpha) \big\}_{\alpha \in \Lambda}$. If the pair $ \big (\mathcal{W}, \{ \psi_\alpha \}_{\alpha \in \Lambda} \big )$ is compatible with the projective system $\left ( \big \{ C^*_{\mathcal{F}_\alpha}(\mathcal{H}_\alpha) \big \}_{\alpha \in \Lambda},  \big \{\phi_{\alpha, \beta} \big \}_{\alpha \leq \beta} \right )$, that is, 
\begin{equation}\label{eq;ps}
\phi_{\alpha, \beta} \circ \psi_\beta = \psi_\alpha    \; \text{whenever} \; \alpha \leq \beta, 
\end{equation}
then there always exists a unique continuous linear map $\Psi : \mathcal{W} \rightarrow C^*_\mathcal{F}(\mathcal{D})$ satisfying $\psi_\alpha = \phi_\alpha \circ \Psi$ for each $\alpha \in \Lambda$. Thus by the uniquenss of the projective limit we obtain 
\begin{equation*}
C^*_\mathcal{F}(\mathcal{D}) = \varprojlim\limits_{\alpha \in \Lambda} C^*_{\mathcal{F}_\alpha}(\mathcal{H}_\alpha).
\end{equation*}
This shows that $C^*_\mathcal{F}(\mathcal{D})$ is a locally $C^\ast$-algebra. Moreover, by referring to Equation (1.33) of \cite{AG}, for each $\alpha \in \Lambda$, one can see that a seminorm defined by $p_\alpha : C^*_\mathcal{F}(\mathcal{D}) \rightarrow \mathbb{R}$ as $p_\alpha(T) := \| T_\alpha \|$ is a $C^\ast$-seminorm. Then the family $\{ p_\alpha \}_{\alpha \in \Lambda}$ of $C^\ast$-seminorms induces a locally convex topology on $C^*_\mathcal{F}(\mathcal{D})$. 
\end{example}

\subsection{Locally von Neumann algebra} In this subsection, we recall some results from the notion of locally von Neumann algebra. First, we recall a definition of locally von Neumann algebra given in \cite{MF}.

\begin{definition}\cite[Definition 1.1]{MF} \label{def; lva 1}
An algebra $\mathcal{M}$ is said to be a locally von Neumann algebra if $\mathcal{M}$ is the projective limit of some projective system of von Neumann algebras. 
\end{definition}

Next, we follow an approach to define a locally von Neumann algebra given in \cite{MJ1}. Let  $\big \{ \mathcal{H}; \mathcal{F} = \{\mathcal{H}_\alpha\}_{\alpha \in \Lambda}; \mathcal{D} \big \}$  be a quantized domain. If $u, v \in \mathcal{D}$, then $u, v \in \mathcal{H}_\gamma$ for some $\gamma \in \Lambda$ and we define
\begin{equation*}
q_{u}(T) := \| Tu \|_{\mathcal{H}_\gamma} \; \; \text{and} \; \;q_{u,v}(T) := \left | \langle u, Tv \rangle_{\mathcal{H}_\gamma} \right | \; \; \text{for all} \; T \in C^*_\mathcal{F}(\mathcal{D}).
\end{equation*}
Thus $q_u$ and $q_{u, v}$ are $C^\ast$-seminorms. Then
\begin{enumerate}
\item[(a)] (SOT) {\it strong operator topology} on $C^*_\mathcal{F}(\mathcal{D})$ is the locally convex topology generated by the family $\{q_u ~~ : ~~ u \in \mathcal{D} \}$ of $C^\ast$-seminorms;
\item[(b)] (WOT) {\it weak operator topology} on $C^*_\mathcal{F}(\mathcal{D})$ is the locally convex topology generated by the family $\{q_{u,v} ~~ : ~~ u, v \in \mathcal{D} \}$ of $C^\ast$-seminorms.
\end{enumerate}
For a detailed introduction to locally von Neumann algebras, a reader is directed to \cite{MF, MJ1, MJ2}. In \cite{MJ1}, the author showed that, in the case of $\ast$-subslegrbras of $C^*_\mathcal{F}(\mathcal{D})$), Definition \ref{def; lva 1} coincides with the following definition of locally von Neumann algebras.

\begin{definition}\cite[Definition 3.7]{MJ1} \label{def; lva 2}
Let  $\big \{ \mathcal{H}; \mathcal{F} = \{\mathcal{H}_\alpha\}_{\alpha \in \Lambda}; \mathcal{D} \big \}$  be a quantized domain. Then a locally von Neumann algebra is a strongly closed unital locally $C^{\ast}$-algebra contained in $C^*_\mathcal{F}(\mathcal{D})$ (with the same unit as in $C^*_\mathcal{F}(\mathcal{D})$).
\end{definition}

Let $\mathcal{M} \subseteq C^*_\mathcal{F}(\mathcal{D})$. Consider the set $\mathcal{M}^\prime := \{ T \in C^*_\mathcal{F}(\mathcal{D}) ~~:~~ TS = ST ~~ \text{for all} ~~ S \in \mathcal{M} \}$ which is called as the commutant of $\mathcal{M}$ (see \cite{MJ1}). We denote $(\mathcal{M}^\prime)^\prime$ by the notation $\mathcal{M}^{\prime \prime}$. The following theorem proved in \cite{MJ1} is the double commutant theorem in the setting of locally von Neumann algebra. 

\begin{theorem}\cite[Theorem 3.6]{MJ1}
Let  $\big \{ \mathcal{H}; \mathcal{F} = \{\mathcal{H}_\alpha\}_{\alpha \in \Lambda}; \mathcal{D} \big \}$  be a quantized domain, and let $\mathcal{M} \subseteq C^*_\mathcal{F}(\mathcal{D})$ be a locally $C^{\ast}$-algebra containing the identity operator on $\mathcal{D}$. Then the following statements are equivalent:
\begin{enumerate}
\item $ \mathcal{M} = \mathcal{M}^{\prime \prime}$;
\item $\mathcal{M}$ is weakly closed;
\item $\mathcal{M}$ is strongly closed.
\end{enumerate}
\end{theorem}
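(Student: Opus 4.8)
The plan is to prove the cycle $(1)\Rightarrow(2)\Rightarrow(3)\Rightarrow(1)$. The first two implications are purely topological and short; the substance is the von Neumann--type density statement hidden in $(3)\Rightarrow(1)$, which I would reduce, level by level, to the classical double commutant theorem on each Hilbert space $\mathcal{H}_\alpha$. For $(1)\Rightarrow(2)$, the key observation is that for \emph{every} subset $\mathcal{N}\subseteq C^*_\mathcal{F}(\mathcal{D})$ the commutant $\mathcal{N}'$ is weakly closed: if $T_\lambda\to T$ in WOT with each $T_\lambda\in\mathcal{N}'$, then for $S\in\mathcal{N}$ and $u,v\in\mathcal{D}$ (picking $\gamma$ with $u,v,Sv,S^\ast u\in\mathcal{H}_\gamma$, which is legitimate since $S$ and $S^\ast$ preserve $\mathcal{D}$) one has $\langle u,(TS-ST)v\rangle_{\mathcal{H}_\gamma}=\lim_\lambda\langle u,(T_\lambda S-ST_\lambda)v\rangle_{\mathcal{H}_\gamma}=0$, so $TS-ST$ kills $\mathcal{D}$ and $T\in\mathcal{N}'$. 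Hence $\mathcal{M}''=(\mathcal{M}')'$ is weakly closed, and since the inclusion $\mathcal{M}\subseteq\mathcal{M}''$ is automatic, $\mathcal{M}=\mathcal{M}''$ forces $\mathcal{M}$ to be weakly closed. For $(2)\Rightarrow(3)$, each weak $C^\ast$-seminorm is dominated by a strong one, $q_{u,v}(T)=|\langle u,Tv\rangle_{\mathcal{H}_\gamma}|\le\|u\|_{\mathcal{H}_\gamma}\,q_v(T)$, so WOT is coarser than SOT on $C^*_\mathcal{F}(\mathcal{D})$, and every weakly closed set is therefore strongly closed.

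\textbf{The density theorem $(3)\Rightarrow(1)$.} Since $\mathcal{M}\subseteq\mathcal{M}''$ always and (by the observation above) $\mathcal{M}''$ is strongly closed, it suffices to prove $\mathcal{M}''\subseteq\overline{\mathcal{M}}^{\mathrm{SOT}}$; combined with the hypothesis $\mathcal{M}=\overline{\mathcal{M}}^{\mathrm{SOT}}$ this gives $\mathcal{M}''=\mathcal{M}$. Fix $T\in\mathcal{M}''$, vectors $u_1,\dots,u_n\in\mathcal{D}$ and $\varepsilon>0$, and by directedness choose $\gamma\in\Lambda$ with all $u_i\in\mathcal{H}_\gamma$. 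Put $S_\gamma:=S|_{\mathcal{H}_\gamma}$ and $\mathcal{M}_\gamma:=\{S_\gamma:S\in\mathcal{M}\}=\phi_\gamma(\mathcal{M})$; since $\phi_\gamma$ is a unital $\ast$-homomorphism (Example \ref{ex;lca}), $\mathcal{M}_\gamma$ is a unital $\ast$-subalgebra of $\mathcal{B}(\mathcal{H}_\gamma)$, and — using that $T,S$ preserve $\mathcal{H}_\gamma$ — the conditions to be met are precisely $\|(T_\gamma-S_\gamma)u_i\|_{\mathcal{H}_\gamma}=q_{u_i}(T-S)<\varepsilon$. By the classical von Neumann double commutant theorem, $\overline{\mathcal{M}_\gamma}^{\mathrm{SOT}}=(\mathcal{M}_\gamma)''$ inside $\mathcal{B}(\mathcal{H}_\gamma)$, so the whole problem reduces to the single assertion
\[
T_\gamma\in(\mathcal{M}_\gamma)''\qquad(\text{bicommutant in }\mathcal{B}(\mathcal{H}_\gamma)),
\]
after which classical density produces the required $S\in\mathcal{M}$ and hence $T\in\overline{\mathcal{M}}^{\mathrm{SOT}}=\mathcal{M}$.

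\textbf{Where the work is.} The crux is to pass from the global relation $T\in\mathcal{M}''$, a statement about commutation inside $C^*_\mathcal{F}(\mathcal{D})$, to the local relation $T_\gamma\in(\mathcal{M}_\gamma)''$ inside $\mathcal{B}(\mathcal{H}_\gamma)$. My plan is to use the projective-limit picture of locally von Neumann algebras (Definition \ref{def; lva 1} and the results of \cite{MF,MJ1}): realize $\mathcal{M}$, using hypothesis $(3)$, as $\varprojlim_\alpha\mathcal{M}_\alpha$ with $\mathcal{M}_\alpha\subseteq\mathcal{B}(\mathcal{H}_\alpha)$ von Neumann algebras and connecting maps the restrictions $S\mapsto S|_{\mathcal{H}_\alpha}$ (here one must check that $\mathcal{M}_\alpha$ is strongly closed in $\mathcal{B}(\mathcal{H}_\alpha)$, which calls for a Kaplansky-type density argument to lift strongly convergent bounded nets through the restriction maps), and then show that $\mathcal{M}'$ is carried by the restrictions onto a compatible family $\{\,(\mathcal{M}_\alpha)'\cap C^*_{\mathcal{F}_\alpha}(\mathcal{H}_\alpha)\,\}$ of relative commutants — each an honest von Neumann algebra because $C^*_{\mathcal{F}_\alpha}(\mathcal{H}_\alpha)$ equals $\{Q^\alpha_\beta:\beta\le\alpha\}'$, the commutant of the orthogonal projections onto the $\mathcal{H}_\beta$'s. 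Transporting the relation $[T,\cdot]=0$ on $\mathcal{M}'$ down to level $\gamma$ and combining it with $[T_\gamma,Q^\gamma_\beta]=0$ (valid since $T\in C^*_\mathcal{F}(\mathcal{D})$ preserves every $\mathcal{H}_\beta$, cf. Remark \ref{rem;lbo}) should then pin $T_\gamma$ inside $(\mathcal{M}_\gamma)''$. I expect this local/global bookkeeping — equivalently, showing that forming commutants and bicommutants is compatible with the projective systems defining $\mathcal{M}$ and $\mathcal{M}'$ — to be the main obstacle, and it is precisely here that the finer quantized-domain structure (existence and coherence of the projections $Q^\alpha_\beta$) is essential; without control of the relative commutant of the ambient algebra $C^*_\mathcal{F}(\mathcal{D})$, a naive transcription of the classical argument does not close.
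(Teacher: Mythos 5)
First, a structural point: the paper does not prove this theorem at all --- it is quoted as background from \cite[Theorem 3.6]{MJ1} --- so there is no in-paper proof to compare yours against; what follows assesses your argument on its own terms. Your two easy implications are fine: $(1)\Rightarrow(2)$ because every commutant taken inside $C^*_\mathcal{F}(\mathcal{D})$ is WOT-closed (your limit argument, using that $Sv,S^\ast u\in\mathcal{D}$ and that $\mathcal{D}$ is dense in $\mathcal{H}$, is correct, and $\mathcal{M}\subseteq\mathcal{M}''$ is automatic), and $(2)\Rightarrow(3)$ from $q_{u,v}(T)\le\|u\|_{\mathcal{H}_\gamma}\,q_v(T)$.

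The genuine gap is in $(3)\Rightarrow(1)$, precisely at the step you yourself flag and then leave as a plan: deducing $T_\gamma\in(\mathcal{M}_\gamma)''$, bicommutant in $\mathcal{B}(\mathcal{H}_\gamma)$, from $T\in\mathcal{M}''$ where both commutants are taken relative to $C^*_\mathcal{F}(\mathcal{D})$. The only commutation data you can transport to level $\gamma$ is that $T_\gamma$ commutes with restrictions of elements of $\mathcal{M}'$ --- a family contained in (and in general properly contained in) the relative commutant $(\mathcal{M}_\gamma)'\cap C^*_{\mathcal{F}_\gamma}(\mathcal{H}_\gamma)$, since an operator at level $\gamma$ need not extend to a global element of $\mathcal{M}'$ --- together with $[T_\gamma,Q^\gamma_\beta]=0$. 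That is strictly weaker than commuting with all of $(\mathcal{M}_\gamma)'$: the classical bicommutant argument needs $T$ to commute with the projection onto $\overline{\mathcal{M}u}$ (and its amplifications), and such projections need not reduce every $\mathcal{H}_\beta$, hence need not lie in $C^*_\mathcal{F}(\mathcal{D})$, hence are not available in $\mathcal{M}'$ as defined in this paper. Worse, no bookkeeping with these data alone can close the argument: if $\Lambda$ has a largest element $\omega$, then $\mathcal{D}=\mathcal{H}_\omega$ and $C^*_\mathcal{F}(\mathcal{D})=\{Q_\beta:\beta\in\Lambda\}'$ is an honest von Neumann algebra, and for $\mathcal{M}=\mathbb{C}\,\mathrm{Id}_{\mathcal{D}}$ (unital, strongly closed) the commutant relative to $C^*_\mathcal{F}(\mathcal{D})$ yields $\mathcal{M}''=C^*_\mathcal{F}(\mathcal{D})$, so an operator such as $T=Q_\beta\in\mathcal{M}''$ has $T_\gamma\notin(\mathcal{M}_\gamma)''=\mathbb{C}\,\mathrm{Id}_{\mathcal{H}_\gamma}$. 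So your reduction target can actually fail under the hypotheses exactly as reproduced here; whatever makes Joita's theorem work must involve assumptions or structure (on the commutant, or on $\mathcal{M}$ relative to the filtration) beyond ``$T$ commutes with $\mathcal{M}'\subseteq C^*_\mathcal{F}(\mathcal{D})$'', and your proposed Kaplansky-type lifting and relative-commutant compatibility are in any case not carried out. As it stands, $(3)\Rightarrow(1)$ is a programme, not a proof.
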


\section{Direct integral of locally Hilbert spaces} \label{sec; Direct integrals}
Motivated from the theory of direct integral of Hilbert spaces, we propose an approach to define the notion of direct integral of locally Hilbert spaces over a standard measure space.

Throughout this section, we assume that $(\Lambda, \leq)$ is a directed poset, and  $(X, \mu)$ is a measure space as described in Note \ref{note; measure space}. In certain examples, we consider specific cases of $(\Lambda, \leq)$ and  $(X, \mu)$, which we state explicitly.  For each $p \in X$,  assign a quantized domain $\big \{ \mathcal{H}_p; \mathcal{E}_p = \{\mathcal{H}_{\alpha, p}\}_{\alpha \in \Lambda}; \mathcal{D}_p \big \}$. For each fixed $\alpha \in \Lambda$, we consider a family $\{\mathcal{H}_{\alpha, p}\}_{p \in X}$ of Hilbert spaces. The direct integral of $\{ \mathcal{H}_{\alpha, p} \}_{p \in X}$ over $(X, \mu)$ is denoted by $\diX \mathcal{H}_{\alpha, p} \, \dmu$ and it is a Hilbert space (see Definition \ref{def;dihs}). In this way, we obtain a family  $\mathcal{E} := \left \{ \int^\oplus_{X} \mathcal{H}_{\alpha, p} \, \dmu \right \}$ of Hilbert spaces. Whenever $\alpha \leq \beta$, by using the fact $\mathcal{H}_{\alpha, p} \subseteq \mathcal{H}_{\beta, p}$ for each fixed $p \in X$ and following Definition \ref{def;dihs}, we see that 
\begin{equation*}
\diX \mathcal{H}_{\alpha, p} \, \dmu \subseteq \diX \mathcal{H}_{\beta, p} \, \dmu.
\end{equation*}
Thus the family $\mathcal{E} = \left \{\int^\oplus_{X} \mathcal{H}_{\alpha, p} \, \dmu \right\}_{\alpha \in \Lambda}$ forms a strictly inductive system of Hilbert spaces.

\begin{definition} \label{Defn: directint_loc}
For each $p \in X$,  we associate a quantized domain $\big \{ \mathcal{H}_p; \mathcal{E}_p = \{\mathcal{H}_{\alpha, p}\}_{\alpha \in \Lambda}; \mathcal{D}_p \big \}$. Then the direct integral of locally Hilbert spaces $\{ \mathcal{D}_p \}_{p \in X}$ is denoted by $\displaystyle \dilX \mathcal{D}_p \, \dmu$ and defined as
\begin{equation} \label{eqn: directint_loc}
\displaystyle \dilX \mathcal{D}_p \, \dmu :=  \varinjlim\limits_{\alpha \in \Lambda}  \int^\oplus_{X} \mathcal{H}_{\alpha, p} \, \dmu = \bigcup\limits_{\alpha \in \Lambda} \left (\int^\oplus_{X} \mathcal{H}_{\alpha, p} \, \dmu \right ) 
\end{equation}
\end{definition}

From Definition \ref{Defn: directint_loc} it is clear that the notion of the direct integral of locally Hilbert spaces $\{ \mathcal{D}_p \}_{p \in X}$ given by $\displaystyle \dilX \mathcal{D}_p \, \dmu$ is the limit of a strictly inductive system $\mathcal{E} = \left \{ \int^\oplus_{X} \mathcal{H}_{\alpha, p} \, \dmu \right \}_{\alpha \in \Lambda}$ of Hilbert spaces. Thus $\displaystyle \dilX \mathcal{D}_p \, \dmu$ is again a locally Hilbert space (see Definition \ref{def;lhs}).

\begin{remark} \label{rem; interchange of limit and integral}
For a family of quantized domain $\big \{ \mathcal{H}_p; \mathcal{E}_p = \{\mathcal{H}_{\alpha, p}\}_{\alpha \in \Lambda}; \mathcal{D}_p \big \}_{p \in X}$ by following Equation \eqref{eqn: directint_loc}, we get 
\begin{equation*}
\dilX \varinjlim\limits_{\alpha \in \Lambda} \mathcal{H}_{\alpha, p} \, \dmu   = \displaystyle \dilX \mathcal{D}_p \, \dmu =  \varinjlim\limits_{\alpha \in \Lambda}  \int^\oplus_{X} \mathcal{H}_{\alpha, p} \, \dmu.
\end{equation*}
This shows that the notion of direct integral of locally Hilbert spaces as defined in Definition \ref{Defn: directint_loc} gives an affirmative answer to the fundamental question of whether the inductive limit and the direct integral can be interchanged.
\end{remark}

\begin{proposition}  \label{prop; QD}
Let $\big \{ \mathcal{H}_p; \mathcal{E}_p = \{\mathcal{H}_{\alpha, p}\}_{\alpha \in \Lambda}; \mathcal{D}_p \big \}_{p \in X}$ be a family of quantized domains. Then 
\begin{equation*}
\left \{ \int^\oplus_{X} \mathcal{H}_p \, \dmu; \mathcal{E} = \left \{\int^\oplus_{X} \mathcal{H}_{\alpha, p} \, \dmu \right\}_{\alpha \in \Lambda}; \dilX \mathcal{D}_p \, \dmu \right \}
\end{equation*}
is a quantized domain.   
\end{proposition}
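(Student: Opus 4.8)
The plan is to verify directly that the triple

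$$\left\{ \int^\oplus_{X} \mathcal{H}_p \, \dmu; \mathcal{E} = \left\{\int^\oplus_{X} \mathcal{H}_{\alpha, p} \, \dmu \right\}_{\alpha \in \Lambda}; \dilX \mathcal{D}_p \, \dmu \right\}$$

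satisfies the three requirements of a quantized domain as laid out after Definition \ref{def;lhs}: namely, that $\mathcal{E}$ is a strictly inductive system of Hilbert spaces, that $\dilX \mathcal{D}_p \, \dmu$ is its inductive limit, and that $\int^\oplus_{X} \mathcal{H}_p \, \dmu$ is the Hilbert space completion of that inductive limit. The first two items have essentially been recorded already in the paragraph preceding Definition \ref{Defn: directint_loc} and in Definition \ref{Defn: directint_loc} itself: for each fixed $\alpha$ the space $\int^\oplus_X \mathcal{H}_{\alpha,p}\,\dmu$ is a (separable) Hilbert space by Definition \ref{def;dihs}, the inclusions $\int^\oplus_X \mathcal{H}_{\alpha,p}\,\dmu \subseteq \int^\oplus_X \mathcal{H}_{\beta,p}\,\dmu$ for $\alpha \le \beta$ are isometric because the fibrewise inclusions $\mathcal{H}_{\alpha,p}\hookrightarrow\mathcal{H}_{\beta,p}$ are isometric and the inner product on a direct integral is computed fibrewise (item (1) of Definition \ref{def;dihs}), and $(\Lambda,\le)$ is a directed poset by standing assumption; so $\mathcal{E}$ is a strictly inductive system and, by Equation \eqref{eq;lhs}, its inductive limit is $\bigcup_{\alpha}\int^\oplus_X \mathcal{H}_{\alpha,p}\,\dmu = \dilX \mathcal{D}_p\,\dmu$ by Definition \ref{Defn: directint_loc}.

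The real content is the third item: I must show $\overline{\dilX \mathcal{D}_p\,\dmu} = \int^\oplus_X \mathcal{H}_p\,\dmu$, where the closure is taken in the Hilbert space $\int^\oplus_X \mathcal{H}_p\,\dmu$ (which is itself a legitimate direct integral Hilbert space since each $\mathcal{H}_p = \overline{\mathcal{D}_p}$ is separable). First I would check the inclusion $\int^\oplus_X \mathcal{H}_{\alpha,p}\,\dmu \subseteq \int^\oplus_X \mathcal{H}_p\,\dmu$ isometrically for each $\alpha$, again using that $\mathcal{H}_{\alpha,p}\subseteq\mathcal{H}_p$ fibrewise and that the decomposition of a vector and the inner product are fibrewise notions; consequently $\dilX \mathcal{D}_p\,\dmu \subseteq \int^\oplus_X \mathcal{H}_p\,\dmu$ as a (generally non-closed) subspace. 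Then for density I would take an arbitrary $x \in \int^\oplus_X \mathcal{H}_p\,\dmu$ and approximate it in norm by elements of $\bigcup_\alpha \int^\oplus_X \mathcal{H}_{\alpha,p}\,\dmu$. The natural approach is pointwise: for $\mu$-a.e.\ $p$ the fibre vector $x(p)$ lies in $\mathcal{H}_p = \bigcup_\alpha \mathcal{H}_{\alpha,p}$, so there is $\alpha(p)$ with $x(p)\in\mathcal{H}_{\alpha(p),p}$; but $\alpha(p)$ varies with $p$ and one cannot in general pick a single $\alpha$ working on a set of full measure. To get around this I would use the measurable/Borel structure underlying the direct integral together with $\sigma$-compactness of $X$ and a measure-exhaustion argument, or — more cleanly — invoke a cofinal sequence in $\Lambda$ when $\Lambda$ is countable, or appeal to the fact that finite-rank (equivalently, "eventually constant fibre-subspace") sections are dense; concretely, one truncates $x$ by replacing $x(p)$ with its orthogonal projection onto $\mathcal{H}_{\alpha,p}$ and lets $\alpha$ increase, controlling the error via dominated convergence, since $\|x(p) - P_{\alpha,p}x(p)\|^2 \to 0$ pointwise and is dominated by $\|x(p)\|^2 \in \mathrm{L}^1(X,\mu)$.

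The main obstacle, and the step that needs the most care, is precisely this density argument: one must produce, for a given $x$ and $\varepsilon > 0$, a single index $\alpha$ and a genuine element $y \in \int^\oplus_X \mathcal{H}_{\alpha,p}\,\dmu$ with $\|x - y\| < \varepsilon$, and this requires knowing that the pointwise projections $p\mapsto P_{\alpha,p}x(p)$ assemble into a bona fide measurable vector field lying in the direct integral (one uses item (2) of Definition \ref{def;dihs}, after checking measurability of $p\mapsto\langle P_{\alpha,p}x(p), y(p)\rangle$ for $y$ ranging over $\int^\oplus_X\mathcal{H}_p\,\dmu$), and that the monotone family of projections $P_{\alpha,\cdot}$ increases strongly to the identity on each fibre. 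If the directed set $\Lambda$ is not countable one may additionally need the separability of $\int^\oplus_X\mathcal{H}_p\,\dmu$ to reduce to a countable cofinal subfamily; I would state this, and if necessary impose it as a mild hypothesis, since it is automatic in the quantized Fréchet domain case $\Lambda = \mathbb{N}$ that the paper emphasizes. Once density is established, combining it with the isometric strictly-inductive-system structure already in hand yields that $\int^\oplus_X\mathcal{H}_p\,\dmu$ is the completion of $\dilX\mathcal{D}_p\,\dmu$, completing the verification that the triple is a quantized domain.
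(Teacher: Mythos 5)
Your proposal follows essentially the same route as the paper's own proof: the paper likewise records the strictly inductive system structure of $\mathcal{E}$, forms the fibrewise projections $u_\alpha(p) := Q_{\alpha,p}x(p)$, places $u_\alpha$ in $\int^\oplus_{X} \mathcal{H}_{\alpha, p} \, \dmu$ via condition (2) of Definition \ref{def;dihs}, and concludes $\| u_\alpha - x \| \to 0$ by dominated convergence with dominating function $p \mapsto \| x(p) \|^2$. Your extra caution about applying dominated convergence along a net indexed by a possibly uncountable $\Lambda$ (reducing to a countable cofinal subfamily, or assuming one) is a refinement of a point the paper's proof passes over silently, not a different method.
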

\begin{proof}
We have shown that  $\mathcal{E} = \left \{\int^\oplus_{X} \mathcal{H}_{\alpha, p} \, \dmu \right\}_{\alpha \in \Lambda}$ forms a strictly inductive system of Hilbert spaces and it follows from Equation \eqref{eqn: directint_loc} that $\displaystyle \dilX \mathcal{D}_p \, \dmu  = \bigcup\limits_{\alpha \in \Lambda} \left (\int^\oplus_{X} \mathcal{H}_{\alpha, p} \, \dmu \right )$. Note that the direct integral of $\{ \mathcal{H}_p \}_{p \in X}$  over  $(X, \mu)$ which is  denoted by $\diX \mathcal{H}_p \, \dmu$ is a Hilbert space (see Definition \ref{def;dihs}).  Thus to complete the proof, it remains to show that $\int^\oplus_{X} \mathcal{H}_p \, \dmu = \overline{\displaystyle\dilX \mathcal{D}_p \, \dmu}$. Let $x = \int^\oplus_{X} x(p) \, \dmu$ be an arbitrarily chosen element in $\int^\oplus_{X} \mathcal{H}_p \, \dmu$. Then from Definition \ref{def;dihs}, we have $x(p) \in \mathcal{H}_p$ for all $p \in X$. For each fixed $\alpha \in \Lambda$ and $p \in X$, we know that $\mathcal{H}_{\alpha, p}$ is a closed subspace of the Hilbert space $\mathcal{H}_p$ and so there is a canonical projection $Q_{\alpha, p} : \mathcal{H}_p \rightarrow \mathcal{H}_{\alpha, p}$. Using this, for each $\alpha \in \Lambda$, we define a map $u_\alpha : X \rightarrow  \bigcup\limits_{p \in X} \mathcal{H}_{\alpha, p}$ as
\begin{equation*}
u_\alpha(p) := 
Q_{\alpha, p} x(p), \; \;  \text{for all} \;\; p \in X.
\end{equation*} 
Then one may observe that for any $y = \int^\oplus_{X} y(p) \, \dmu \in \int^\oplus_{X} \mathcal{H}_p \, \dmu$, we have
\begin{align*}
\int_X \big | \la  u_\alpha(p), y(p) \ra \big | \, \dmu   &\leq \int_X \big \| u_\alpha(p) \big  \| \big \|y(p) \big \|  \, \dmu \\
& \leq \int_X \big \| x(p) \big \| \big \|y(p) \big \|  \, \dmu < \infty,
\end{align*}
where the last inequality holds valid as both $x, y \in \diX \mathcal{H}_p \, \dmu$. Thus by using Condition (2) of Definition \ref{def;dihs}, we get that $u_\alpha \in \int^\oplus_{X} \mathcal{H}_{\alpha, p} \, \dmu$, and hence $u_\alpha \in \displaystyle \dilX \mathcal{D}_p \, \dmu$ for all $\alpha \in \Lambda$. In summary, we obtain a net $\{ u_\alpha \}_{\alpha \in \Lambda}$ in $\displaystyle \dilX \mathcal{D}_p \, \dmu$.

Our aim is to show that the element $x \in \int^\oplus_{X} \mathcal{H}_p \, \dmu$ can be approximated by the net $\{ u_\alpha \}_{\alpha \in \Lambda}$ in $\diX \mathcal{H}_p \, \dmu$. To see this, let us define functions $f_\alpha : X \rightarrow \mathbb{R}$ and $g : X \rightarrow \mathbb{R}$ by  
\begin{equation*}
f_\alpha(p) := \big \| u_\alpha(p) - x(p) \big \|_{\mathcal{H}_p}^2, \; \; \;  \; \; \; \; \;  \text{and} \; \; \;  \; \; \; \; \; g(p) := \big \| x(p) \big \|_{\mathcal{H}_p}^2 \; \; \; \text{for all} \; \; \; p \in X.
\end{equation*}
Then for all $p \in X$, we get $f_\alpha(p) \rightarrow 0$, and we observe
\begin{align*}
f_\alpha(p) &=  \big \| u_\alpha(p) - x(p) \big \|_{\mathcal{H}_p}^2 \\
&= \big \| Q_{\alpha, p} x(p) - x(p) \big \|_{\mathcal{H}_p}^2 \\
&= \big \| Q_{\alpha, p} x(p) \big \|_{\mathcal{H}_p}^2 + \big \| x(p) \big \|_{\mathcal{H}_p}^2 - \la  Q_{\alpha, p} x(p), x(p)  \ra_{\mathcal{H}_p} -  \la x(p), Q_{\alpha, p} x(p)  \ra_{\mathcal{H}_p} \\
&= \big \| Q_{\alpha, p} x(p) \big \|_{\mathcal{H}_p}^2 + \big \| x(p) \big \|_{\mathcal{H}_p}^2 - 2 \big\| Q_{\alpha, p} x(p) \big \|_{\mathcal{H}_p}^2  \\
&= \big \| x(p) \big \|_{\mathcal{H}_p}^2 - \big \| Q_{\alpha, p} x(p) \big \|_{\mathcal{H}_p}^2 \\
&\leq  \big \| x(p) \big \|_{\mathcal{H}_p}^2 = g(p) \; \; \; \; \text{for all} \; \; p \in X.
\end{align*}
Since $x \in \int^\oplus_{X} \mathcal{H}_p \, \dmu$, by following condition (1) of Definition \ref{def;dihs}, we know that $\int_{X} g(p) \, \dmu < \infty$. Thus by applying the dominated convergence theorem, we obtain 
\begin{equation*}
\big \| u_\alpha - x \big \|_{\diX \mathcal{H}_p \, \dmu}^2 = \int_{X} f_\alpha(p) \, \dmu \rightarrow  0.
\end{equation*}
As $x  \in \int^\oplus_{X} \mathcal{H}_p \, \dmu$ was arbitrarily chosen, we obtain $$\int^\oplus_{X} \mathcal{H}_p \, \dmu = \overline{\displaystyle\dilX \mathcal{D}_p \, \dmu}.$$ This completes the proof.
\end{proof}

\begin{remark} \label{rem; examples of dilhs particular cases}
We wish to point out two particular cases of direct integral of locally Hilbert spaces.
\begin{enumerate} 
\item \label{rem; examples of dilhs particular cases_lhs is hs} In Definition \ref{Defn: directint_loc}, if $\mathcal{D}_p = \mathcal{H}_p$ for a.e. $p \in X$, that is, $\mathcal{D}_p$ is a Hilbert space for a.e. $p \in X$, then the notion of the direct integral of locally Hilbert spaces introduced in Definition \ref{Defn: directint_loc} coincides with the classical notion of direct integral Hilbert spaces (given as in Definition \ref{def;dihs}). In this case, we obtain $\int^\oplus_{X} \mathcal{H}_p \, \dmu = \displaystyle \dilX \mathcal{D}_p \, \dmu $.
\item \label{rem; examples of dilhs particular cases_direct sum} Consider a measure space $\big ( X = \{ 1, 2, 3, ..., n \}, \mu \big )$, where $\mu$ is the counting measure on $X$. Let $\big \{ \mathcal{H}_i; \mathcal{E}_i = \{\mathcal{H}_{\alpha, i}\}_{\alpha \in \Lambda}; \mathcal{D}_i \big \}_{i \in X}$ be a family of quantized domains. The direct sum of locally Hilbert spaces $\big \{ \mathcal{D}_i \big \}_{i \in X}$  is defined as $\bigoplus\limits_{i =1}^n \mathcal{D}_i := \bigcup\limits_{\alpha \in \Lambda}  \left (\bigoplus\limits_{i =1}^n \mathcal{H}_{\alpha, i} \right )$ (see \cite[Section 3.2]{AD}). Then by following the classical result, $\displaystyle \diX \mathcal{H}_{\alpha, i} \, \mathrm{d} \mu(i)  = \bigoplus\limits_{i =1}^n \mathcal{H}_{\alpha, i}$ (refer \cite[Examples 14.1.4 (b)]{KR2}) for each $\alpha \in \Lambda$, one may observe that
\begin{equation*}
\displaystyle \dilX \mathcal{D}_i \, \mathrm{d} \mu(i) = \bigcup\limits_{\alpha \in \Lambda} \left ( \displaystyle \diX \mathcal{H}_{\alpha, i} \, \mathrm{d} \mu(i) \right ) = \bigcup\limits_{\alpha \in \Lambda}  \left (\bigoplus\limits_{i =1}^n \mathcal{H}_{\alpha, i} \right ) = \bigoplus\limits_{i =1}^n \mathcal{D}_i.
\end{equation*}
\end{enumerate}
\end{remark}

In the example below, we discuss about the infinite direct sum of locally Hilbert spaces which is a particular case of direct integral of locally Hilbert spaces.

\begin{example} \label{ex;dilhs1 with same standard spaces}
Consider a measure space $\big ( \mathbb{N}, \mu \big )$, where $\mu$ is the counting measure on $\mathbb{N}$. Let $\big \{ \mathcal{H}_n; \mathcal{E}_n = \{\mathcal{H}_{\alpha, n}\}_{\alpha \in \Lambda}; \mathcal{D}_n \big \}_{n \in \mathbb{N}}$ be a family of quantized domains. By extending the notion of the (finite) direct sum of locally Hilbert spaces to the infinite direct sum of $\big \{ \mathcal{D}_n \big \}_{n \in \mathbb{N}}$,  we get
\begin{equation*}
\bigoplus\limits_{n =1}^\infty \mathcal{D}_n := \big \{ x = (x_1, x_2, ...) \; : \; x_n \in \mathcal{D}_n  \; \; \text{for all} \; n \in \mathbb{N} \; \; \text{and} \; \; \text{supp}(x) < \infty \; \big \}.
\end{equation*}
Then clearly, $\bigoplus\limits_{n =1}^\infty \mathcal{D}_n := \bigcup\limits_{\alpha \in \Lambda}  \left (\bigoplus\limits_{n =1}^\infty \mathcal{H}_{\alpha, n} \right )$. On the other hand, for each fixed $\alpha \in \Lambda$, by following \cite[Examples 14.1.4 (b)]{KR2} we obtain, $\int^\oplus_{\mathbb{N}} \mathcal{H}_{\alpha, n} \, \mathrm{d} \mu(n)  = \bigoplus\limits_{n =1}^\infty \mathcal{H}_{\alpha, n}$. Therefore, 
\begin{equation*}
\displaystyle \int^{\oplus_\text{loc}}_\mathbb{N} \mathcal{D}_n \; \mathrm{d} \mu(n) = \bigcup\limits_{\alpha \in \Lambda} \left ( \displaystyle \int^{\oplus}_\mathbb{N} \mathcal{H}_{\alpha, n} \, \mathrm{d} \mu(n) \right ) = \bigcup\limits_{\alpha \in \Lambda}  \left (\bigoplus\limits_{n =1}^\infty \mathcal{H}_{\alpha, n} \right ) = \bigoplus\limits_{n =1}^\infty \mathcal{D}_n.
\end{equation*}
\end{example}

\begin{example} \label{ex;dilhs2}
Consider $\Lambda = \mathbb{N}$ and the quantized Frechet domain $\big \{ \ell^2(\mathbb{N}), \mathcal{F} = \{\mathcal{H}_n\}_{n \in \mathbb{N}}, \mathcal{D} \big \}$ as given in Example \ref{ex;lhs}. Next, consider a measure space $(\mathbb{R}, \mu)$. For each $p \in \mathbb{R}$, assign a  quantized domain 
\begin{equation*}
\left \{ \mathcal{H}_p = \ell^2(\mathbb{N}); \mathcal{E}_p = \{ \mathcal{H}_{n, p} = \mathcal{H}_n\}_{n \in \mathbb{N}}; \mathcal{D}_p = \mathcal{D} \right \}.    
\end{equation*}
Firstly, note that $\mathcal{H}_p = \ell^2(\mathbb{N})$ for each $p \in \mathbb{R}$, by following the Corollary in page no. 176 of \cite{DixV} we get a unitary operator 
$U : \int^{\oplus_\text{loc}}_\mathbb{R} \mathcal{H}_p \, \dmu \rightarrow \text{L}^2 \big (\mathbb{R}, B(\mathbb{R}), \mu \big) \otimes \ell^2(\mathbb{N})$. Moreover, for each fixed $n \in \mathbb{N}$ 
\begin{equation*}
\int^{\oplus}_\mathbb{R} \mathcal{H}_{n, p} \, \dmu = \int^{\oplus}_\mathbb{R} \mathcal{H}_{n} \, \dmu = U^\ast \left ( \text{L}^2 \big (\mathbb{R}, B(\mathbb{R}), \mu \big) \otimes \mathcal{H}_n \right )
\end{equation*}
Since each $\mathcal{D}_p = \mathcal{D}$, from Definition \ref{Defn: directint_loc}, we have
\begin{align*}
\displaystyle \int^{\oplus_\text{loc}}_\mathbb{R} \mathcal{D} \, \dmu = \displaystyle \int^{\oplus_\text{loc}}_\mathbb{R} \mathcal{D}_p \, \dmu 
&= \varinjlim\limits_{n \in \mathbb{N}}  \int^{\oplus}_\mathbb{R} \mathcal{H}_{n, p} \, \dmu \\
&= \varinjlim\limits_{n \in \mathbb{N}} U^\ast \left ( \text{L}^2 \big (\mathbb{R}, B(\mathbb{R}), \mu \big) \otimes \mathcal{H}_n \right ) \\
&= U^\ast \left ( \varinjlim\limits_{n \in \mathbb{N}}   \text{L}^2 \big (\mathbb{R}, B(\mathbb{R}), \mu \big) \otimes \mathcal{H}_n \right ) \\
& = U^\ast \left  (\text{L}^2 \big (\mathbb{R}, B(\mathbb{R}), \mu \big) \otimes_{\text{loc}} \mathcal{D} \right )
\end{align*}
where the last equality follows from \cite[Equation (1.23)]{AG}. Therefore, the locally Hilbert spaces $\displaystyle \int^{\oplus_\text{loc}}_\mathbb{R} \mathcal{D} \, \dmu$ and $\text{L}^2 \big (\mathbb{R}, B(\mathbb{R}), \mu \big) \otimes_{\text{loc}} \mathcal{D}$ can be identified through the unitary operator $U$. 
\end{example}

\section{Decomposable and Diagonalizable Locally Bounded Operators} \label{sec; Decomposable and Diagonalizable Locally Bounded Operators}
Throughout this section, we assume that $(\Lambda, \leq)$ is a directed poset, and  $(X, \mu)$ is a measure space as described in Note \ref{note; measure space}. In certain results, we make use of specific cases of $(\Lambda, \leq)$ and  $(X, \mu)$, which we state explicitly. Consider a family $\big \{ \mathcal{H}_p; \mathcal{E}_p = \{\mathcal{H}_{\alpha, p}\}_{\alpha \in \Lambda}; \mathcal{D}_p \big \}_{p \in X}$  of quantized domains. Then by following Proposition \ref{prop; QD}, we get a quantized domain 
\begin{equation*}
\left \{ \int^\oplus_{X} \mathcal{H}_p \, \dmu; \mathcal{E} = \left \{\int^\oplus_{X} \mathcal{H}_{\alpha, p} \, \dmu \right\}_{\alpha \in \Lambda}; \dilX \mathcal{D}_p \, \dmu \right \}.
\end{equation*}
The collection of all locally bounded operators on the locally Hilbert space $\displaystyle \dilX \mathcal{D}_p \, \dmu$ is denoted by  $C^\ast_{\mathcal{E}}\left (\displaystyle \dilX \mathcal{D}_p \, \dmu \right)$ (see Definition \ref{def;lbo}). Now, we turn our attention towards sub-classes of  $C^\ast_{\mathcal{E}}\left (\displaystyle \dilX \mathcal{D}_p \, \dmu \right)$ that adhere to the notion of direct integrals. We introduce these classes motivated from the classical setup of direct integral of Hilbert spaces, particularly from the notion of decomposable and diagonalizable bounded operators (see Definition \ref{def;Debo}).

\begin{definition} \label{def;DecDiag(lbo)}
Let $\big \{ \mathcal{H}_p; \mathcal{E}_p = \{\mathcal{H}_{\alpha, p}\}_{\alpha \in \Lambda}; \mathcal{D}_p \big \}_{p \in X}$ be a family of quantized domains.
 Then a locally bounded operator $T \in C^\ast_{\mathcal{E}}\left (\displaystyle \dilX \mathcal{D}_p \, \dmu \right)$ is said to be:
\begin{enumerate} 
\item \label{def;Dec(lbo)} \textbf{decomposable}, if there exists a family $ \big \{ T_p \in C^\ast_{\mathcal{E}_p}\left ( \mathcal{D}_p \right) \big \}_{p \in X}$ of locally bounded operators such that for any $u \in \displaystyle \dilX \mathcal{D}_p \, \dmu$, we have
\begin{align*}
(Tu)(p) = T_pu(p) \; \; \; \; \text{for a.e.} \; \; p \in X.
\end{align*}
In this case, we denote the operator $T$ by the notation $\displaystyle \dilX T_p \, \dmu$ and so
\begin{equation*}
\left (\dilX T_p \, \dmu \right ) \left (\dilX u(p) \, \dmu \right )= \dilX T_pu(p) \, \dmu;
\end{equation*}
\item \label{def;Diag(lbo)} \textbf{diagonalizable}, if $T$ is decomposable and there exists a measurable function $f : X \rightarrow \mathbb{C}$ such that for any $u \in \displaystyle \dilX \mathcal{D}_p \, \dmu$, we have 
\begin{equation*}
(Tu)(p) = f(p)u(p) \; \; \; \; \text{for a.e.} \; \; p \in X.
\end{equation*}
In this situation, we get $T = \displaystyle \dilX T_p \, \dmu = \displaystyle \dilX f(p) \cdot \mathrm{Id}_{\mathcal{D}_p} \, \dmu$.
\end{enumerate}  
\end{definition}

\noindent
We denote the collection of all decomposable locally bounded operators and the collection of all diagonalizable locally bounded operators on $\displaystyle \dilX \mathcal{D}_p \, \dmu$ by $C^\ast_{\mathcal{E}, \text{DEC}}\left (\displaystyle \dilX \mathcal{D}_p \, \dmu \right)$ and $C^\ast_{\mathcal{E}, \text{DIAG}}\left (\displaystyle \dilX \mathcal{D}_p \, \dmu \right)$ respectively. From Definition \ref{def;DecDiag(lbo)}, one may observe that
\begin{equation} \label{eqn; containment}
C^\ast_{\mathcal{E}, \text{DIAG}}\left (\displaystyle \dilX \mathcal{D}_p \, \dmu \right) \subseteq C^\ast_{\mathcal{E}, \text{DEC}}\left (\displaystyle \dilX \mathcal{D}_p \, \dmu \right) \subseteq C^\ast_{\mathcal{E}}\left (\displaystyle \dilX \mathcal{D}_p \, \dmu \right).
\end{equation}

\subsection{Observations} \label{obs; Observations} Consider a family $\big \{ \mathcal{H}_p; \mathcal{E}_p = \{\mathcal{H}_{\alpha, p}\}_{\alpha \in \Lambda}; \mathcal{D}_p \big \}_{p \in X}$ of quantized domains. Then by following Proposition \ref{prop; QD}, we obtain a quantized domain given by 
\begin{equation*}
\left \{ \int^\oplus_{X} \mathcal{H}_p \, \dmu; \mathcal{E} = \left \{\int^\oplus_{X} \mathcal{H}_{\alpha, p} \, \dmu \right\}_{\alpha \in \Lambda}; \displaystyle \dilX \mathcal{D}_p \, \dmu \right \}.
\end{equation*}
The following are the key observations that are useful in understanding the notion of  decomposable and diagonalizable locally bounded operators on $\displaystyle \dilX \mathcal{D}_p \, \dmu$.

\begin{enumerate} 
\item \label{obs;M DEC and M DIAG are star algebras} 
The class of decomposable locally bounded operators on $\displaystyle \dilX \mathcal{D}_p \, \dmu$ is denoted by $C^\ast_{\mathcal{E}, \text{DEC}}\left (\displaystyle \dilX \mathcal{D}_p \, \dmu \right)$, which is a locally convex $\ast$-subalgebra of $C^\ast_\mathcal{E}\left(\displaystyle \dilX \mathcal{D}_p \, \dmu \right)$ with respect to the following operations: 
\begin{multicols}{2}
\begin{enumerate}
\item $T + S = \displaystyle \dilX T_p + S_p \, \dmu$\\
\item $\lambda \cdot T = \displaystyle \dilX \lambda \cdot T_p \, \dmu$
\item $T \cdot S = \displaystyle \dilX T_p \cdot S_p \, \dmu$\\
\item $T^\ast = \displaystyle \dilX T^\ast_p \, \dmu$,
\end{enumerate}
\end{multicols}
\noindent
for every $T = \displaystyle \dilX T_p \, \dmu, \; S = \displaystyle \dilX S_p \, \dmu$ in $C^\ast_\mathcal{E}\left(\displaystyle \dilX \mathcal{D}_p \, \dmu \right)$ and $\lambda \in \mathbb{C}$. Similarly, $C^\ast_{\mathcal{E}, \text{DIAG}}\left (\displaystyle \dilX \mathcal{D}_p \, \dmu \right)$  forms a locally convex $\ast$-subalgebra of  $C^\ast_\mathcal{E}\left(\displaystyle \dilX \mathcal{D}_p \, \dmu \right)$.

\item \label{obs; description of V alpha T V alpha star}
Suppose $T \in C^\ast_{\mathcal{E}, \text{DEC}}\left (\displaystyle \dilX \mathcal{D}_p \, \dmu \right)$, then $T$ is a locally bounded operator and there is a family $\big \{T_p \in C^\ast_{\mathcal{E}_p}\left (\mathcal{D}_p \right) \big \}_{p \in X}$ such that $T = \displaystyle \dilX T_p \, \dmu$. Since $\displaystyle \dilX \mathcal{D}_p \, \dmu  = \bigcup\limits_{\alpha \in \Lambda} \left (\int^\oplus_{X} \mathcal{H}_{\alpha, p} \, \dmu \right ) $. (see Definition \ref{Defn: directint_loc}), for each $\alpha \in \Lambda$, we obtain a decomposable bounded linear operator on $\int^\oplus_{X} \mathcal{H}_{\alpha, p} \, \dmu$ as
\begin{equation} \label{eq;restriction of DecLBO}
T_\alpha := T\big|_{\int^\oplus_{X} \mathcal{H}_{\alpha, p} \, \dmu} = \int^\oplus_{X} T_{p} \big|_{\mathcal{H}_{\alpha, p}} \,  \mathrm{d} \mu(p).
\end{equation}
Moreover, from Equation \eqref{eq;norm of T}, we have 
\begin{equation} \label{eqn; norm of T alpha when T is decomposable}
\| T_\alpha \| = \text{ess} \sup\limits_{p \in X} \big \{ \big \| T_{p}\big|_{\mathcal{H}_{\alpha, p}} \big\| \big \} < \infty.
\end{equation}

\item \label{obs; description of T alpha for diagonalizable} In particular, if $T \in C^\ast_{\mathcal{E}, \text{DIAG}}\left (\displaystyle \dilX \mathcal{D}_p \, \dmu \right)$, then 
$T = \displaystyle \dilX f(p) \cdot \mathrm{Id}_{\mathcal{D}_{p}} \, \dmu$, for some measurable function $f : X \rightarrow \mathbb{C}$ and  for each $\alpha \in \Lambda$ the operator 
\begin{equation} \label{eq;restriction of DiagLBO}
T_\alpha := T\big|_{\int^\oplus_{X} \mathcal{H}_{\alpha, p} \, \dmu} = \int^\oplus_{X} f(p) \cdot \mathrm{Id}_{\mathcal{H}_{\alpha, p}} \,  \mathrm{d} \mu (p)
\end{equation}
is bounded. Moreover, from Equations \eqref{eqn; containment} and \eqref{eqn; norm of T alpha when T is decomposable} we have
\begin{equation}
\| T_\alpha \| = \text{ess} \sup\limits_{p \in X} \big \{ \big \| T_{p}\big|_{\mathcal{H}_{\alpha, p}} \big\| \big \}  =  \text{ess} \sup\limits_{p \in X} \big \{ \big \| f(p) \big\| \big \} < \infty,
\end{equation}
for every $\alpha \in \Lambda$. Hence $f \in \text{L}^\infty \big (X, \mu \big )$.
Since $\| T_\alpha \|  =  \text{ess} \sup\limits_{p \in X} \big \{ \big \| f(p) \big\| \big \}$ for each $\alpha \in \Lambda$, we get $\text{ess} \sup\limits_{\alpha \in \Lambda} \big \{ \big \| T_\alpha \big\| \big \} < \infty$ and so by \cite[Example 1.14]{AG} $T$ is a bounded operator on the locally Hilbert space $\displaystyle \dilX \mathcal{D}_p \, \dmu$. Thus $T$ can be canonically extended to the Hilbert space completion of $\displaystyle \dilX \mathcal{D}_p \, \dmu$ which is given by $\diX \mathcal{H}_p \, \dmu$ (see Proposition \ref{prop; QD}). This shows that every operator in $C^\ast_{\mathcal{E}, \text{DIAG}}\left (\displaystyle \dilX \mathcal{D}_p \, \dmu \right)$ is in fact bounded and it corresponds to a function in the space $\text{L}^\infty \big (X, \mu \big )$. 
\end{enumerate}

\begin{theorem}
Let $\big \{ \mathcal{H}_p; \mathcal{E}_p = \{\mathcal{H}_{\alpha, p}\}_{\alpha \in \Lambda}; \mathcal{D}_p \big \}$ be a quantized domain for each $p \in X$. Then \; $C^\ast_{\mathcal{E}, \text{DIAG}} \left(\displaystyle \dilX \mathcal{D}_p \, \dmu \right)$ can be canonically embeded in $\mathcal{B} \left(\diX \mathcal{H}_p \, \dmu \right)$ as an abelian von Neumann algebra.
\end{theorem}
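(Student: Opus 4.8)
The plan is to produce an explicit map $\Phi$ carrying $C^\ast_{\mathcal{E}, \text{DIAG}}\!\left(\dilX \mathcal{D}_p \, \dmu\right)$ into $\mathcal{B}\!\left(\diX \mathcal{H}_p \, \dmu\right)$, to check that $\Phi$ is an injective unital $\ast$-homomorphism, and then to identify its range with the abelian von Neumann algebra of diagonalizable bounded operators on $\diX \mathcal{H}_p \, \dmu$ supplied by Theorem~\ref{thm;DeDibo vNA}.

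First I would invoke the third observation of Subsection~\ref{obs; Observations}: every $T \in C^\ast_{\mathcal{E}, \text{DIAG}}\!\left(\dilX \mathcal{D}_p \, \dmu\right)$ is of the form $T = \dilX f(p)\cdot \mathrm{Id}_{\mathcal{D}_p}\,\dmu$ for a function $f \in \text{L}^\infty(X,\mu)$ that is uniquely determined $\mu$-almost everywhere, and $T$ is a bounded operator on the locally Hilbert space, hence extends canonically to its Hilbert space completion $\diX \mathcal{H}_p \, \dmu$ (Proposition~\ref{prop; QD}). I define $\Phi(T)$ to be this extension; concretely $\Phi(T) = \diX f(p)\cdot \mathrm{Id}_{\mathcal{H}_p}\,\dmu$, a diagonalizable bounded operator in the sense of Definition~\ref{def;Debo}. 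Since $\Phi(T)$ is merely the unique bounded extension of $T$ from a dense subspace, the map $\Phi$ is canonical.

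Next I would verify that $\Phi$ is an injective unital $\ast$-homomorphism. Additivity, $\mathbb{C}$-linearity, multiplicativity and $\ast$-compatibility follow by comparing the fiberwise operations on $C^\ast_{\mathcal{E}, \text{DIAG}}$ recorded in the first observation of Subsection~\ref{obs; Observations} with the corresponding fiberwise operations on diagonalizable bounded operators, all of which reduce to $f+g$, $\lambda f$, $fg$ and $\overline{f}$ on the associated $\text{L}^\infty$-symbols. Injectivity is immediate: if $\Phi(T)=0$ then $f(p)x(p)=0$ for a.e.\ $p$ and every $x$ in the direct integral, so $f=0$ $\mu$-a.e., whence $T=0$. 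To pin down the range, one inclusion is clear from the construction — the image of $\Phi$ consists of diagonalizable bounded operators — while for the reverse inclusion, given $g \in \text{L}^\infty(X,\mu)$ the operator $S := \diX g(p)\cdot \mathrm{Id}_{\mathcal{H}_p}\,\dmu$ maps each $\mathcal{H}_{\alpha,p}$ into itself, hence leaves every $\diX \mathcal{H}_{\alpha, p}\,\dmu$ invariant with norm at most $\|g\|_\infty$, so by Equation~\eqref{eqn; locally bounded operator on D} its restriction to $\dilX \mathcal{D}_p \, \dmu$ is a diagonalizable locally bounded operator with symbol $g$, and $\Phi$ sends it to $S$.

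Finally, by Theorem~\ref{thm;DeDibo vNA} the set of diagonalizable bounded operators on $\diX \mathcal{H}_p \, \dmu$ is the abelian commutant of the decomposable operators, hence an abelian von Neumann algebra; together with the previous step this exhibits $\Phi$ as a $\ast$-isomorphism of $C^\ast_{\mathcal{E}, \text{DIAG}}\!\left(\dilX \mathcal{D}_p \, \dmu\right)$ onto an abelian von Neumann algebra inside $\mathcal{B}\!\left(\diX \mathcal{H}_p \, \dmu\right)$, which is the claimed embedding. I expect the only delicate point to be the two-way correspondence in the range computation — namely that measurability and essential boundedness of the symbol pass without loss between the locally Hilbert space and its completion, and that the fiberwise algebraic operations of the first observation genuinely match the classical ones — but given the third observation of Subsection~\ref{obs; Observations}, which already absorbs most of this, the remainder is routine bookkeeping, and the von Neumann-algebra conclusion is a direct appeal to Theorem~\ref{thm;DeDibo vNA}.
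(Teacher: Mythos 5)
Your proposal is correct and follows essentially the same route as the paper: define $\Phi$ as the canonical bounded extension $T = \dilX f(p)\cdot \mathrm{Id}_{\mathcal{D}_p}\,\dmu \mapsto \diX f(p)\cdot \mathrm{Id}_{\mathcal{H}_p}\,\dmu$, using observation (3) of Subsection \ref{obs; Observations} and Proposition \ref{prop; QD}, check it is an injective $\ast$-homomorphism, and identify the range with the abelian von Neumann algebra of diagonalizable bounded operators from Theorem \ref{thm;DeDibo vNA}. The only difference is that you spell out the surjectivity-onto-the-diagonalizable-operators step in more detail than the paper, which simply cites Definition \ref{def;Debo} and the observation; this is harmless.
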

\begin{proof}
Let $T$ be an element of $C^\ast_{\mathcal{E}, \text{DIAG}} \left(\displaystyle \dilX \mathcal{D}_p \, \dmu \right)$. Then from \ref{obs; description of T alpha for diagonalizable} of Observation \ref{obs; Observations} we know that $T = \displaystyle \dilX f(p) \cdot \mathrm{Id}_{\mathcal{D}_{p}} \, \dmu$, for some  $f \in \text{L}^\infty \big (X, \mu \big )$ and in fact $T$ is a bounded operator. Since the locally Hilbert space $\displaystyle \dilX \mathcal{D}_p \, \dmu$ is dense in the Hilbert space $\diX \mathcal{H}_p \, \dmu$ (see Proposition \ref{prop; QD}), $T$ has a unique extension to a bounded operator (which we denote by $\widetilde{T}$) on  $\diX \mathcal{H}_p \, \dmu$. Using this, we define a map $\Phi : C^\ast_{\mathcal{E}, \text{DIAG}} \left(\displaystyle \dilX \mathcal{D}_p \, \dmu \right) \rightarrow  \mathcal{B} \left(\diX \mathcal{H}_p \, \dmu \right)$ by
\begin{equation*}
\Phi \left  (T = \displaystyle \dilX f(p) \cdot \mathrm{Id}_{\mathcal{D}_{p}} \, \dmu \right ) :=  \widetilde{T} = \displaystyle \diX f(p) \cdot \mathrm{Id}_{\mathcal{H}_{p}} \, \dmu ,
\end{equation*}
for every $ T \in C^\ast_{\mathcal{E}, \text{DIAG}} \left(\displaystyle \dilX \mathcal{D}_p \, \dmu \right)$. From the definition of the map $\Phi$, it is clear that $\Phi$ is an injective $\ast$-homomorphism. Moreover it follows from \ref{def;Diagbo} of Definition \ref{def;Debo} and \ref{obs; description of T alpha for diagonalizable} of Observation \ref{obs; Observations} that $\Phi \left ( C^\ast_{\mathcal{E}, \text{DIAG}} \left(\displaystyle \dilX \mathcal{D}_p \, \dmu \right) \right )$  coincides with the abelian von Neumann algebra of all diagonalizable bounded operators on $\diX \mathcal{H}_p \, \dmu$. This proves the result.
\end{proof}

Next, in some particular cases, we prove that the collection of all decomposable locally bounded operators on the direct integral of locally Hilbert spaces is a locally von Neumann algebra. 

\begin{theorem} \label{thm;DEC and DIAG LvNA}
Let $\big \{ \mathcal{H}_p; \mathcal{E}_p = \{\mathcal{H}_{\alpha, p}\}_{\alpha \in \Lambda}; \mathcal{D}_p \big \}$ be a quantized domain for each $p \in X$. Suppose either $\Lambda$ is a countable set or $\mu$ is a countable measure on $X$, then $C^\ast_{\mathcal{E}, \text{DEC}} \left(\displaystyle \dilX \mathcal{D}_p \, \dmu \right)$ is a locally von Neumann algebra.
\end{theorem}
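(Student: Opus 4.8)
The plan is to realize $C^\ast_{\mathcal{E}, \text{DEC}} \left(\dilX \mathcal{D}_p \, \dmu \right)$ as the projective limit of a projective system of von Neumann algebras, which by Definition \ref{def; lva 1} will establish that it is a locally von Neumann algebra. The natural candidates for the members of this system come from the restriction maps already recorded in Equation \eqref{eq;restriction of DecLBO}: for each $\alpha \in \Lambda$ set $\mathcal{M}_\alpha$ to be the von Neumann algebra of all decomposable bounded operators on the Hilbert space $\int^\oplus_X \mathcal{H}_{\alpha, p} \, \dmu$, which is a von Neumann algebra by Theorem \ref{thm;DeDibo vNA}. For $\alpha \leq \beta$ define $\rho_{\alpha, \beta} : \mathcal{M}_\beta \to \mathcal{M}_\alpha$ by $\rho_{\alpha, \beta}(S) = S\big|_{\int^\oplus_X \mathcal{H}_{\alpha, p} \, \dmu}$, and for a decomposable $S = \int^\oplus_X S_p \, \dmu$ this is $\int^\oplus_X S_p\big|_{\mathcal{H}_{\alpha,p}} \, \dmu$, which lands in $\mathcal{M}_\alpha$ because each $\mathcal{H}_{\alpha,p} \subseteq \mathcal{H}_{\beta,p}$ is invariant under $S_p$ (here one uses that $T_p \in C^\ast_{\mathcal{E}_p}(\mathcal{D}_p)$, so $T_p(\mathcal{H}_{\alpha,p}) \subseteq \mathcal{H}_{\alpha,p}$). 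One checks $\rho_{\alpha, \beta}$ is a $\ast$-homomorphism and $\rho_{\alpha, \gamma} = \rho_{\alpha, \beta} \circ \rho_{\beta, \gamma}$ for $\alpha \leq \beta \leq \gamma$, so $\left(\{\mathcal{M}_\alpha\}_{\alpha \in \Lambda}, \{\rho_{\alpha, \beta}\}_{\alpha \leq \beta}\right)$ is a projective system of von Neumann algebras.

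Next I would define $\Theta : C^\ast_{\mathcal{E}, \text{DEC}} \left(\dilX \mathcal{D}_p \, \dmu \right) \to \varprojlim_{\alpha \in \Lambda} \mathcal{M}_\alpha$ by $\Theta(T) = \{T_\alpha\}_{\alpha \in \Lambda}$ where $T_\alpha$ is the restriction in Equation \eqref{eq;restriction of DecLBO}. Compatibility $\rho_{\alpha, \beta}(T_\beta) = T_\alpha$ is immediate from the definitions of the restrictions, so $\Theta$ is well defined; it is clearly a $\ast$-homomorphism by part \ref{obs;M DEC and M DIAG are star algebras} of Observation \ref{obs; Observations}, and injectivity follows because $\dilX \mathcal{D}_p \, \dmu = \bigcup_\alpha \int^\oplus_X \mathcal{H}_{\alpha, p} \, \dmu$, so an operator vanishing on every piece is zero. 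The crux is surjectivity: given $\{S^{(\alpha)}\}_{\alpha \in \Lambda} \in \varprojlim_\alpha \mathcal{M}_\alpha$ with each $S^{(\alpha)} = \int^\oplus_X S^{(\alpha)}_p \, \dmu$, the compatibility $S^{(\beta)}\big|_{\int^\oplus_X \mathcal{H}_{\alpha,p}\,\dmu} = S^{(\alpha)}$ forces $S^{(\beta)}_p\big|_{\mathcal{H}_{\alpha,p}} = S^{(\alpha)}_p$ for $\mu$-a.e. $p$ and all $\alpha \leq \beta$; one must then glue the $\{S^{(\alpha)}_p\}_\alpha$ into a single $T_p \in C^\ast_{\mathcal{E}_p}(\mathcal{D}_p)$ for a.e. $p$ (so that $T_p = \varprojlim_\alpha S^{(\alpha)}_p$ via Equation \eqref{eq; inverese limit of bounded operators}), define $T = \dilX T_p \, \dmu$, and verify $T$ is a well-defined locally bounded operator on $\dilX \mathcal{D}_p \, \dmu$ with $\Theta(T) = \{S^{(\alpha)}\}_\alpha$. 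Finally, I would argue $\Theta$ is a homeomorphism by noting the seminorms on both sides are generated by the norms $T \mapsto \|T_\alpha\|$ of Equation \eqref{eqn; norm of T alpha when T is decomposable}, matching the projective-limit topology of Equation \eqref{eq;pl}.

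The main obstacle is exactly the measure-theoretic gluing step in the proof of surjectivity: the compatibility relations among the $S^{(\alpha)}_p$ hold only almost everywhere, and a priori the exceptional null set depends on the pair $(\alpha, \beta)$. This is precisely where the hypothesis that \emph{either $\Lambda$ is countable or $\mu$ is a countable measure} enters — in the countable-$\Lambda$ case one takes a countable union of null sets to obtain a single conull set on which all compatibility relations hold simultaneously, and then applies Equation \eqref{eq; inverese limit of bounded operators} pointwise; in the countable-measure case the measure space is (up to a null set) atomic, so the direct integral collapses to a countable direct sum of Hilbert spaces as in Example \ref{ex;dilhs1 with same standard spaces} and the gluing is done atom by atom with no measurability subtlety. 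One also has to check that $p \mapsto T_p\big|_{\mathcal{H}_{\alpha,p}}$ is a measurable (decomposable) field for each $\alpha$, which follows since it agrees a.e. with the decomposable field defining $S^{(\alpha)}$. Once the conull set is fixed, the remaining verifications — that $T_p$ preserves each $\mathcal{H}_{\alpha,p}$, that $T_p^\ast$ does too, that the relevant essential suprema are finite by Equation \eqref{eqn; norm of T alpha when T is decomposable}, and hence $T_p \in C^\ast_{\mathcal{E}_p}(\mathcal{D}_p)$ and $T \in C^\ast_{\mathcal{E}, \text{DEC}}$ — are routine.
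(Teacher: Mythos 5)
Your overall strategy (realize $C^\ast_{\mathcal{E}, \text{DEC}}$ as a projective limit of von Neumann algebras and invoke Definition \ref{def; lva 1}, with the countability hypothesis used to merge the a.e.\ exceptional sets) is the same as the paper's, but there is a genuine gap in how you set up the projective system. You take $\mathcal{M}_\alpha$ to be the von Neumann algebra of \emph{all} decomposable bounded operators on $\int^\oplus_{X} \mathcal{H}_{\alpha, p} \, \dmu$ (Theorem \ref{thm;DeDibo vNA}) and define $\rho_{\alpha,\beta}(S) = S\big|_{\int^\oplus_{X} \mathcal{H}_{\alpha, p} \, \dmu}$. For a general $S = \int^\oplus_X S_p \, \dmu \in \mathcal{M}_\beta$ the fibres $S_p$ are arbitrary elements of $\mathcal{B}(\mathcal{H}_{\beta,p})$; they are \emph{not} restrictions of operators in $C^\ast_{\mathcal{E}_p}(\mathcal{D}_p)$, so there is no reason why $S_p(\mathcal{H}_{\alpha,p}) \subseteq \mathcal{H}_{\alpha,p}$. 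Your parenthetical justification quietly assumes $S$ already comes from a locally bounded decomposable operator, which is circular: the map $\rho_{\alpha,\beta}$ is simply not defined on all of $\mathcal{M}_\beta$, so the projective system you build does not exist as stated.

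Moreover, the natural repair of keeping only those decomposable $S$ that leave the subspaces $\int^\oplus_X \mathcal{H}_{\alpha,p}\,\dmu$ ($\alpha \le \beta$) \emph{invariant} is still not enough for your surjectivity/gluing step: to land in $C^\ast_{\mathcal{E}_p}(\mathcal{D}_p)$ you need $T_p^\ast$ to preserve each $\mathcal{H}_{\alpha,p}$ as well, and invariance of a compatible family does not give this. (Already with $X$ a single point, $\Lambda = \mathbb{N}$ and $\mathcal{H}_n = \mathrm{span}\{e_1,\dots,e_n\}$ as in Example \ref{ex;lhs}, the compatible family $S^{(n)}e_k = e_1$, $k \le n$, leaves every $\mathcal{H}_m$ invariant but glues to an operator that is not in $C^\ast_{\mathcal{F}}(\mathcal{D})$, since its formal adjoint would have to send $e_1$ to the all-ones sequence.) The paper avoids both problems by taking at level $\beta$ the algebra $C^\ast_{\mathcal{E}_\beta, \text{DEC}}\left(\int^\oplus_X \mathcal{H}_{\beta,p}\,\dmu\right)$ of decomposable operators for which each $\int^\oplus_X \mathcal{H}_{\alpha,p}\,\dmu$, $\alpha \le \beta$, is a \emph{reducing} subspace; this makes the connecting maps well defined and the glued fibres locally bounded, but it forces an extra step your proposal omits entirely, namely proving (the paper's Claim 1, via strong-operator closedness inside the decomposable von Neumann algebra) that this smaller algebra is itself a von Neumann algebra. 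With that correction your restriction maps, the a.e.\ bookkeeping under the countability hypothesis, and the identification of the projective limit go through essentially as in the paper (the paper verifies the universal property rather than exhibiting an isomorphism onto the concrete limit of Equation \eqref{eq;pl}, but that difference is cosmetic).
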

\begin{proof}
For each $\beta \in \Lambda$, consider the set $\Lambda_\beta = \{ \alpha \in \Lambda \;  :  \; \alpha \leq \beta \}$, the branch of $\Lambda$ determined by $\beta$. Then with the induced order $\leq$ from $(\Lambda, \leq)$ the set $(\Lambda_\beta, \leq)$ is a directed poset (see \cite[Section 1.4]{AG}). For each fixed $\beta \in \Lambda$, consider a strictly inductive system $\mathcal{E}_\beta = \left \{\int^\oplus_{X} \mathcal{H}_{\alpha, p} \, \dmu \right\}_{\alpha \in \Lambda_\beta}$ of Hilbert subspaces of the Hilbert space $ \int^{\oplus}_{X} \mathcal{H}_{\beta, p} \, \dmu$.
Now recall that $C^\ast_{\mathcal{E}_\beta} \left( \int^{\oplus}_{X} \mathcal{H}_{\beta, p} \, \dmu \right)$ consists of all bounded operators on the Hilbert space $\int^{\oplus}_{X} \mathcal{H}_{\beta, p} \, \dmu$ for which the Hilbert space $\int^{\oplus}_{X} \mathcal{H}_{\alpha, p} \, \dmu$ is a reducing subspace, whenever $\alpha \leq \beta$. In particular, $C^\ast_{\mathcal{E}_\beta, \text{DEC}} \left( \int^{\oplus}_{X} \mathcal{H}_{\beta, p} \, \dmu \right)$ consists of all decomposable bounded operators on the Hilbert space $\int^{\oplus}_{X} \mathcal{H}_{\beta, p} \, \dmu$ for which the Hilbert space $\int^{\oplus}_{X} \mathcal{H}_{\alpha, p} \, \dmu$ is a reducing subspace, whenever $\alpha \leq \beta$.

\noindent
\textbf{Claim 1:} For each  $\beta\in \Lambda$, \; $C^\ast_{\mathcal{E}_\beta, \text{DEC}} \left( \int^{\oplus}_{X} \mathcal{H}_{\beta, p} \, \dmu \right)$ is a von Neumann algebra in $\mathcal{B}\left( \int^{\oplus}_{X} \mathcal{H}_{\beta, p} \, \dmu \right)$. 

From \ref{obs;M DEC and M DIAG are star algebras} of Observations \ref{obs; Observations}, we know that this space is a $\ast$-algebra. Thus to prove our claim, it suffices to show that it is closed in the strong operator topology.  To see this, let $\left \{ T_n \right \}_{n \in \mathbb{N}}$ be a sequence in $C^\ast_{\mathcal{E}_\beta, \text{DEC}} \left( \int^{\oplus}_{X} \mathcal{H}_{\beta, p} \, \dmu \right)$ such that $T_n \rightarrow T$ in the strong operator topology on $\mathcal{B}\left( \int^{\oplus}_{X} \mathcal{H}_{\beta, p} \, \dmu \right)$. Here $T$ is decomposable since  $C^\ast_{\mathcal{E}_\beta, \text{DEC}} \left(\int^{\oplus}_{X} \mathcal{H}_{\beta, p} \, \dmu \right)$ is contained in the von Neumann algebra of all decomposable operators on the Hilbert space $\int^{\oplus}_{X} \mathcal{H}_{\beta, p} \, \dmu$, and thus $T =  \int^\oplus_{X} T_{p} \, \dmu$. Now we show that for every $\alpha \leq \beta$ the subspace $\int^{\oplus}_{X} \mathcal{H}_{\alpha, p} \, \dmu$ is  reducing for $T$. Let  $x\in \int^{\oplus}_{X} \mathcal{H}_{\beta, p} \, \dmu$. Then $x \in \int^{\oplus}_{X} \mathcal{H}_{\alpha, p} \, \dmu$ for some $\alpha \leq \beta$. Since $\int^{\oplus}_{X} \mathcal{H}_{\alpha, p} \, \dmu$ is a reducing subspace for each $T_n$ we obtain $T_n(x) \in \int^{\oplus}_{X} \mathcal{H}_{\alpha, p} \, \dmu$. Then we have $Tx \in \int^{\oplus}_{X} \mathcal{H}_{\alpha, p} \, \dmu$ since $\big \|  T_nx - Tx \big \|_{\int^{\oplus}_{X} \mathcal{H}_{\alpha, p} \, \dmu} \rightarrow 0$, as $n \to \infty$. Therefore, $\int^{\oplus}_{X} \mathcal{H}_{\alpha, p} \, \dmu$ is  reducing for $T$ for every $\alpha \leq \beta$ and hence $T \in C^\ast_{\mathcal{E}_\beta, \text{DEC}} \left( \int^{\oplus}_{X} \mathcal{H}_{\beta, p} \, \dmu \right)$, which proves our Claim 1.

\noindent \textbf{Claim 2:} $C^\ast_{\mathcal{E}, \text{DEC}} \left(\displaystyle \dilX \mathcal{D}_p \, \dmu \right) = \varprojlim\limits_{\alpha \in \Lambda} C^\ast_{\mathcal{E}_\alpha, \text{DEC}} \left(\int^{\oplus}_{X} \mathcal{H}_{\alpha, p} \, \dmu \right)$.

Note that if $T \in C^\ast_{\mathcal{E}_\beta, \text{DEC}} \left( \int^{\oplus}_{X} \mathcal{H}_{\beta, p} \, \dmu \right)$, then $\int^{\oplus}_{X} \mathcal{H}_{\alpha, p} \, \dmu$ is a reducing subspace for $T$, whenever $\alpha \leq \beta$. Using this, for $\alpha \leq \beta$, we define a map $\phi_{\alpha, \beta} : C^\ast_{\mathcal{E}_\beta, \text{DEC}} \left( \int^{\oplus}_{X} \mathcal{H}_{\beta, p} \, \dmu \right) \rightarrow C^\ast_{\mathcal{E}_\alpha, \text{DEC}} \left(  \int^{\oplus}_{X} \mathcal{H}_{\alpha, p} \, \dmu \right)$ by
\begin{equation} \label{eq; phi alpha beta}
\phi_{\alpha, \beta} \left ( T \right ) := T\big|_{\int^{\oplus}_{X} \mathcal{H}_{\alpha, p} \, \dmu}, 
\end{equation}
for every $T \in C^\ast_{\mathcal{E}_\beta, \text{DEC}} \left( \int^{\oplus}_{X} \mathcal{H}_{\beta, p} \, \dmu \right)$. Clearly $\big \{ \phi_{\alpha, \beta} \big \}_{\alpha \leq \beta}$ is a family of  normal $\ast$-homomorphisms of von Neumann algebras with the property that $\phi_{\alpha, \alpha} = \mathrm{Id}_{\int^{\oplus}_{X} \mathcal{H}_{\alpha, p} \, \dmu}$ and $\phi_{\alpha, \beta} \circ \phi_{\beta, \gamma} = \phi_{\alpha, \gamma}$, whenever $\alpha \leq \beta \leq \gamma$. This shows that $\left (  \left \{ C^\ast_{\mathcal{E}_\alpha, \text{DEC}} \left(\int^{\oplus}_{X} \mathcal{H}_{\alpha, p} \, \dmu \right) \right \}_{\alpha \in \Lambda}, \{ \phi_{\alpha, \beta} \}_{\alpha \leq \beta} \right )$ is a projective system of von Neumann algebras. We know from (\ref{obs;M DEC and M DIAG are star algebras}) of Observations \ref{obs; Observations} that $C^\ast_{\mathcal{E}, \text{DEC}} \left(\displaystyle \dilX \mathcal{D}_p \, \dmu \right)$ is a locally convex $\ast$-algebra with respect to the $C^\ast$-seminorm given for each $\alpha \in \Lambda$, as $T \mapsto \left  \| T\big|_{\int^{\oplus}_{X} \mathcal{H}_{\alpha, p} \, \dmu}  \right \|$ (see Example \ref{ex;lca}). Thus for each $\alpha \in \Lambda$, the map $\phi_\alpha : C^\ast_{\mathcal{E}, \text{DEC}} \left(\displaystyle \dilX \mathcal{D}_p \, \dmu \right) \rightarrow   C^\ast_{\mathcal{E}_\alpha, \text{DEC}} \left(\int^{\oplus}_{X} \mathcal{H}_{\alpha, p} \, \mathrm{d} \mu_\alpha \right)$ defined as 
\begin{equation} \label{eq; phi alpha}
\phi_\alpha \left ( T \right ) := T\big|_{\int^{\oplus}_{X} \mathcal{H}_{\alpha, p} \, \dmu},
\end{equation}
is a continuous $\ast$-homomorphism satisfying $\phi_{\alpha, \beta} \circ \phi_\beta = \phi_\alpha$, whenever $\alpha \leq \beta$. This shows that the pair $ \left (C^\ast_{\mathcal{E}, \text{DEC}} \left(\displaystyle \dilX \mathcal{D}_p \, \dmu \right) , \{ \phi_\alpha \}_{\alpha \in \Lambda} \right )$ is compatible with the projective system given by $\left (  \left \{ C^\ast_{\mathcal{E}_\alpha, \text{DEC}} \left(\int^{\oplus}_{X} \mathcal{H}_{\alpha, p} \, \dmu \right) \right \}_{\alpha \in \Lambda}, \{ \phi_{\alpha, \beta} \}_{\alpha \leq \beta} \right )$  of von Neumann algebras  \big ( see Subsection 1.1 of \cite{AG} \big ). To prove  that $ \left (C^\ast_{\mathcal{E}, \text{DEC}} \left(\displaystyle \dilX \mathcal{D}_p \, \dmu \right) , \{ \phi_\alpha \}_{\alpha \in \Lambda} \right )$ is the projective limit (in the category of topological algebras) of  $\left (  \left \{ C^\ast_{\mathcal{E}_\alpha, \text{DEC}} \left( \int^{\oplus}_{X} \mathcal{H}_{\alpha, p} \, \dmu \right) \right \}_{\alpha \in \Lambda}, \{ \phi_{\alpha, \beta} \}_{\alpha \leq \beta} \right )$,  we consider a locally convex $\ast$-algebra $\mathcal{W}$ with a family $\{ \psi_\alpha \}_{\alpha \in \Lambda}$, where $\psi_\alpha : \mathcal{W} \rightarrow  C^\ast_{\mathcal{E}_\alpha, \text{DEC}} \left( \int^{\oplus}_{X} \mathcal{H}_{\alpha, p} \, \mathrm{d} \mu_\alpha \right)$ is a continuous $\ast$-homomorphism for each $\alpha \in \Lambda$.  Let the pair $ \big (\mathcal{W} , \{ \psi_\alpha \}_{\alpha \in \Lambda} \big )$ be compatible with the projective system given by $\left (  \left \{ C^\ast_{\mathcal{E}_\alpha, \text{DEC}} \left( \int^{\oplus}_{X} \mathcal{H}_{\alpha, p} \, \dmu \right) \right \}_{\alpha \in \Lambda}, \{ \phi_{\alpha, \beta} \}_{\alpha \leq \beta} \right )$, that is,
\begin{equation} \label{eqn; compatible for W and phi alpha}
\phi_{\alpha, \beta} \circ \psi_\beta = \psi_\alpha \; \; \text{whenever} \; \; \alpha \leq \beta.
\end{equation}
For each fixed $w \in \mathcal{W}$ and $\alpha \in \Lambda$, since $\psi_\alpha(w)$ is decomposable, we have $\psi_\alpha(w) =  \int^\oplus_{X}  \psi_\alpha(w)_p \, \dmu$ for some family $\left \{ \psi_\alpha(w)_p  \right \}_{p \in X}$ of bounded operators on $\mathcal{H}_{\alpha, p}$ satisfying the property that $\mathcal{H}_{\delta, p}$ is reducing subspace for $\psi_\alpha(w)_p$ for each $\delta \in \Lambda_{\alpha}$. Whenever $\alpha \leq \beta$, for each $w \in \mathcal{W}$, by following Equation \eqref{eqn; compatible for W and phi alpha}, we obtain
\begin{align*}
\int^\oplus_{X}  \psi_\alpha(w)_p \, \dmu = \psi_\alpha(w) &= \phi_{\alpha, \beta} \circ \psi_\beta(w) \\
&= \psi_\beta(w) \big|_{\int^{\oplus}_{X} \mathcal{H}_{\alpha, p} \, \dmu} \\
&= \int^\oplus_{X}  \psi_\beta(w)_p\big|_{\mathcal{H}_{\alpha, p}} \, \dmu.
\end{align*}
Thus, there exists a measurable set $E^w_{\alpha, \beta} \subseteq X$ such that $\mu(E^w_{\alpha, \beta}) = 0$ and $ \psi_\beta(w)_p \big |_{\mathcal{H}_{\alpha, p}} \neq  \psi_\alpha(w)_p$ for every $p \in E^w_{\alpha, \beta}$. Let $E^w_\alpha := \bigcup\limits_{\beta \in \Lambda, \alpha \leq \beta} E^w_{\alpha, \beta}$ and $E^w :=  \bigcup\limits_{\alpha \in \Lambda} E^w_{\alpha}$.

Now assume that $\Lambda$ is countable, then $E^w_\alpha$ is a measurable set for each $\alpha \in \Lambda$. This implies that $E^w$ is also a measurable set, and, moreover, $\mu(E^w) = 0$. On the other hand, if $\mu$ is a counting measure on $X$, then for any $\alpha \leq \beta$, the set $E^w_{\alpha, \beta} = \emptyset$ and thus $E^w = \emptyset$. So in both cases (that is, when $\Lambda$ is countable or $\mu$ is a counting measure on $X$), we consider the set $X \setminus E^w$ and without loss of generality we again denote it by $X$. It follows that $\psi_\beta(w)_p\big|_{\mathcal{H}_{\alpha, p}} = \psi_\alpha(w)_p$ for all $p \in X$, whenever $\alpha \leq \beta$. Using this, for each fixed $w \in \mathcal{W}$ and $p \in X$, define a locally bounded operator $T^w_p \in C^\ast_{\mathcal{E}_{p}} (\mathcal{D}_{p})$ as $T^w_p := \varprojlim\limits_{\alpha \in \Lambda} \psi_\alpha(w)_p$ (see Equation \eqref{eq; inverese limit of bounded operators}). Now we define a map $\Psi : \mathcal{W} \rightarrow C^\ast_{\mathcal{E}, \text{DEC}} \left(\displaystyle \dilX \mathcal{D}_p \, \dmu \right)$ by
\begin{equation*}
w \mapsto \displaystyle \dilX T^w_p  \, \dmu \; \; \; \text{for each} \; \; w \in \mathcal{W}.
\end{equation*}
Next, for $w \in \mathcal{W}$ and $\alpha \in \Lambda$, by following Equation \eqref{eq; phi alpha}, we obtain 
\begin{align*}
\phi_\alpha \circ \Psi(w) = \phi_\alpha \left ( \displaystyle \dilX T^w_p  \, \dmu \right ) &=  \left ( \displaystyle \dilX T^w_p  \, \dmu \right ) \big|_{\int^{\oplus}_{X} \mathcal{H}_{\alpha, p} \, \dmu} \\
&= \int^\oplus_{X} T^w_p \big|_{\mathcal{H}_{\alpha, p}} \,  \dmu \\
&= \int^\oplus_{X} \psi_\alpha(w)_p \,  \dmu = \psi_\alpha(w).
\end{align*}
Since $w \in \mathcal{W}$ and $\alpha \in \Lambda$ were chosen arbitrarily, we get $\phi_\alpha \circ \Psi = \psi_\alpha$ for each $\alpha \in \Lambda$. Now it remains to show that the map $\Psi$ satisfying $\phi_\alpha \circ \Psi = \psi_\alpha$ for each $\alpha \in \Lambda$ is unique. Suppose there exists another map $\hat{\Psi} : \mathcal{W} \rightarrow C^\ast_{\mathcal{E}, \text{DEC}} \left(\displaystyle \dilX \mathcal{D}_p \, \dmu \right)$ such that for all $\alpha \in \Lambda$, we have $\phi_\alpha \circ \hat{\Psi} = \psi_\alpha$. For $w \in \mathcal{W}$ and $\alpha \in \Lambda$,  by following Equation \eqref{eq; phi alpha}, we see that
\begin{align*}
\Psi(w)\big|_{\int^{\oplus}_{X} \mathcal{H}_{\alpha, p} \, \dmu} = \phi_\alpha \circ \Psi(w) =  \psi_\alpha(w) = \phi_\alpha \circ \hat{\Psi}(w) =  \hat{\Psi}(w) \big|_{\int^{\oplus}_{X} \mathcal{H}_{\alpha, p} \, \dmu}.
\end{align*}
As the above equation holds for each $\alpha \in \Lambda$, we get $\Psi(w) = \hat{\Psi}(w)$. Since $w \in \mathcal{W}$ was chosen arbitrarily, we get  that the map $\Psi$ is unique. Therefore, by the uniqueness of the projective limit (see  \cite[Section 1.1]{AG}), we obtain
\begin{align*}
C^\ast_{\mathcal{E}, \text{DEC}} \left(\displaystyle \dilX \mathcal{D}_p \, \dmu \right) = \varprojlim\limits_{\alpha \in \Lambda} C^\ast_{\mathcal{E}_\alpha, \text{DEC}} \left(\int^{\oplus}_{X} \mathcal{H}_{\alpha, p} \, \dmu \right).
\end{align*}
This proves Claim 2. 

Finally, by using Claim 1, Claim 2 and Definition \ref{def; lva 1}, we conclude that $C^\ast_{\mathcal{E}, \text{DEC}} \left(\displaystyle \dilX \mathcal{D}_p \, \dmu \right) $ is a locally von Neumann algebra.
\end{proof}

Let $\big \{ \mathcal{H}; \mathcal{F} = \{\mathcal{H}_{\alpha}\}_{\alpha \in \Lambda}; \mathcal{D} \big \}$  be a quantized domain, and let $\mathcal{M} \subseteq C^*_\mathcal{F}(\mathcal{D})$. We recall that the commutant of $\mathcal{M}$ is denoted by $\mathcal{M}^\prime$ and is defined as 
\begin{equation} \label{eqn; commutant}
\mathcal{M}^\prime := \{ T \in C^*_\mathcal{F}(\mathcal{D}) ~~:~~ TS = ST ~~ \text{for all} ~~ S \in \mathcal{M} \}.    
\end{equation}
Consider a family  $\big \{ \mathcal{H}_p; \mathcal{E}_p = \{\mathcal{H}_{\alpha, p}\}_{\alpha \in \Lambda}; \mathcal{D}_p \big \}_{p \in X}$  of quantized domains. Recall that the collection of all locally bounded operators on $\displaystyle \dilX \mathcal{D}_p \, \dmu$ is denoted by $C^\ast_\mathcal{E} \left ( \displaystyle \dilX \mathcal{D}_p \, \dmu  \right )$, where $\mathcal{E} = \left \{\int^\oplus_{X} \mathcal{H}_{\alpha, p} \, \dmu \right\}_{\alpha \in \Lambda}$. By motivating from  Theorem \ref{thm;DeDibo vNA}, in the remaining of this section, we study the relationship between the subcollections of $C^\ast_\mathcal{E} \left ( \displaystyle \dilX \mathcal{D}_p \, \dmu  \right )$, in particular, \; \; $C^\ast_{\mathcal{E}, \text{DEC}} \left(\displaystyle \dilX \mathcal{D}_p \, \dmu \right)$ and $\left ( C^\ast_{\mathcal{E}, \text{DIAG}} \left(\displaystyle \dilX \mathcal{D}_p \, \dmu \right) \right )^ \prime$.

\begin{remark} \label{rem; M Dec and M Diag commutant}
Let $\big \{ \mathcal{H}_p; \mathcal{E}_p = \{\mathcal{H}_{\alpha, p}\}_{\alpha \in \Lambda}; \mathcal{D}_p \big \}_{p \in X}$  be a family of quantized domains.  Then we get the following containments
\begin{align*}
 C^\ast_{\mathcal{E}, \text{DEC}} \left(\displaystyle \dilX \mathcal{D}_p \, \dmu \right) &\subseteq  \left (C^\ast_{\mathcal{E}, \text{DIAG}} \left(\displaystyle \dilX \mathcal{D}_p \, \dmu \right) \right)^\prime ;  \\
 C^\ast_{\mathcal{E}, \text{DIAG}} \left(\displaystyle \dilX \mathcal{D}_p \, \dmu \right) &\subseteq \left (C^\ast_{\mathcal{E}, \text{DEC}} \left(\displaystyle \dilX \mathcal{D}_p \, \dmu \right) \right)^\prime.
\end{align*}
To see the above containments, let $T \in C^\ast_{\mathcal{E}, \text{DEC}} \left(\displaystyle \dilX \mathcal{D}_p \, \dmu \right)$ and $S \in C^\ast_{\mathcal{E}, \text{DIAG}} \left(\displaystyle \dilX \mathcal{D}_p \, \dmu \right)$. Then by following Definition \ref{def;DecDiag(lbo)} we get a family $ \big \{ T_p \in C^\ast_{\mathcal{E}_p} \left(\mathcal{D}_p \right) \big \}_{p \in X}$ of locally bounded operators and a measurable function $f : X \rightarrow \mathbb{C}$ such that for any $u \in \displaystyle \dilX \mathcal{D}_p \, \dmu$, we have
\begin{align*}
(Tu)(p) = T_pu(p)  \; \; \text{and}  \; \;  (Su)(p) = f(p)u(p) \; \; \text{for a.e.} \; \; p \in X.
\end{align*}
So for a fixed $u = \displaystyle \dilX u(p) \, \dmu \in \displaystyle \dilX \mathcal{D}_p \, \dmu$, we get
\begin{align*}
\big (TS \big ) \left (\dilX u(p) \, \dmu \right ) &= T \left(\dilX f(p)u(p) \, \dmu \right) \\
&= \dilX T_p f(p) u(p) \, \dmu \\
&= \dilX  f(p) T_p u(p) \, \dmu \\
&= S \left (\dilX T_pu(p) \, \dmu \right) \\
&= \big  (ST \big ) \left(\dilX u(p) \, \dmu \right )
\end{align*}
Since $T \in C^\ast_{\mathcal{E}, \text{DEC}} \left(\displaystyle \dilX \mathcal{D}_p \, \dmu \right), \; S \in C^\ast_{\mathcal{E}, \text{DIAG}} \left(\displaystyle \dilX \mathcal{D}_p \, \dmu \right)$ and $u \in \displaystyle \dilX \mathcal{D}_p \, \dmu$ were arbitrarily chosen, we obtain the desired containments.   
\end{remark}

\begin{lemma} \label{lem; V_alphaTV_alpha star is decomposable}
Let $\big \{ \mathcal{H}_p; \mathcal{E}_p = \{\mathcal{H}_{\alpha, p}\}_{\alpha \in \Lambda}; \mathcal{D}_p \big \}_{p \in X}$  be a family of quantized domains, and let $T \in \left (C^\ast_{\mathcal{E}, \text{DIAG}} \left(\displaystyle \dilX \mathcal{D}_p \, \dmu \right) \right)^\prime$. Then for each fixed $\alpha \in \Lambda$, the restriction $T_\alpha := T\big |_{\int^\oplus_{X} \mathcal{H}_{\alpha, p} \, \dmu}$ is a decomposable bounded operator on the Hilbert space $\int^\oplus_{X} \mathcal{H}_{\alpha, p} \, \dmu$.
\end{lemma}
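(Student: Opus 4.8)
The plan is to reduce the claim to the classical fact (Theorem~\ref{thm;DeDibo vNA}) that, on the fixed Hilbert space $\int^\oplus_{X} \mathcal{H}_{\alpha, p} \, \dmu$, the commutant of the algebra of diagonalizable bounded operators is exactly the algebra of decomposable bounded operators. So the whole task is to show that the restriction $T_\alpha := T\big|_{\int^\oplus_{X} \mathcal{H}_{\alpha, p} \, \dmu}$ commutes with every diagonalizable \emph{bounded} operator on $\int^\oplus_{X} \mathcal{H}_{\alpha, p} \, \dmu$; then Theorem~\ref{thm;DeDibo vNA} immediately gives that $T_\alpha$ is decomposable. First I would note that $T$ is a locally bounded operator on $\dilX \mathcal{D}_p \, \dmu$, so by Definition~\ref{def;lbo} and the surrounding discussion we have $T\big(\int^\oplus_{X} \mathcal{H}_{\alpha, p} \, \dmu\big) \subseteq \int^\oplus_{X} \mathcal{H}_{\alpha, p} \, \dmu$ with $T_\alpha = T\big|_{\int^\oplus_{X} \mathcal{H}_{\alpha, p} \, \dmu} \in \mathcal{B}\big(\int^\oplus_{X} \mathcal{H}_{\alpha, p} \, \dmu\big)$; this is the content of Equation~\eqref{eqn; locally bounded operator on D} together with Remark~\ref{rem;lbo}. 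Hence boundedness of $T_\alpha$ is free, and only the commutation property needs work.

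The key step is a bootstrapping argument to produce enough diagonalizable \emph{locally bounded} operators. Fix $\alpha \in \Lambda$ and let $g \in \text{L}^\infty(X, \mu)$. I want a diagonalizable locally bounded operator $S_g = \dilX g(p) \cdot \mathrm{Id}_{\mathcal{D}_p} \, \dmu \in C^\ast_{\mathcal{E}, \text{DIAG}}\big(\dilX \mathcal{D}_p \, \dmu\big)$; that this is a legitimate element follows from Definition~\ref{def;DecDiag(lbo)}\eqref{def;Diag(lbo)} and the observations in Subsection~\ref{obs; Observations}\eqref{obs; description of T alpha for diagonalizable}, where it is shown that every bounded measurable $f$ gives a diagonalizable locally bounded operator whose restriction to each $\int^\oplus_{X} \mathcal{H}_{\beta, p} \, \dmu$ is the diagonalizable bounded operator $\int^\oplus_{X} g(p) \cdot \mathrm{Id}_{\mathcal{H}_{\beta, p}} \, \mathrm{d}\mu(p)$. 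Since $T \in \big(C^\ast_{\mathcal{E}, \text{DIAG}}\big)^\prime$, we have $T S_g = S_g T$ as operators on $\dilX \mathcal{D}_p \, \dmu$. Restricting this identity to the reducing subspace $\int^\oplus_{X} \mathcal{H}_{\alpha, p} \, \dmu$ (both $T$ and $S_g$ leave it invariant, $T$ by local boundedness and $S_g$ because its components preserve the $\mathcal{H}_{\alpha, p}$), I get $T_\alpha \cdot (S_g)_\alpha = (S_g)_\alpha \cdot T_\alpha$ on $\int^\oplus_{X} \mathcal{H}_{\alpha, p} \, \dmu$, where $(S_g)_\alpha = \int^\oplus_{X} g(p) \cdot \mathrm{Id}_{\mathcal{H}_{\alpha, p}} \, \mathrm{d}\mu(p)$ is an arbitrary diagonalizable bounded operator on that Hilbert space (as $g$ ranges over $\text{L}^\infty(X,\mu)$). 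Thus $T_\alpha$ lies in the commutant of the algebra of diagonalizable bounded operators on $\int^\oplus_{X} \mathcal{H}_{\alpha, p} \, \dmu$, and Theorem~\ref{thm;DeDibo vNA} yields $T_\alpha = \int^\oplus_{X} (T_\alpha)_p \, \mathrm{d}\mu(p)$ for some measurable family $\{(T_\alpha)_p \in \mathcal{B}(\mathcal{H}_{\alpha, p})\}_{p \in X}$ with $\text{ess sup}_p \|(T_\alpha)_p\| = \|T_\alpha\| < \infty$; that is, $T_\alpha$ is a decomposable bounded operator, as desired.

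The point requiring the most care — and the main obstacle — is making the restriction step watertight: one must be sure that $S_g$ genuinely defines an element of $C^\ast_{\mathcal{E}, \text{DIAG}}\big(\dilX \mathcal{D}_p \, \dmu\big)$ (in particular that it is locally bounded, which is exactly where $g \in \text{L}^\infty$ rather than merely measurable is used, via Observation~\ref{obs; Observations}\eqref{obs; description of T alpha for diagonalizable}), and that the operator identity $TS_g = S_g T$, which a priori holds on all of $\dilX \mathcal{D}_p \, \dmu = \bigcup_\beta \int^\oplus_{X} \mathcal{H}_{\beta, p} \, \dmu$, restricts correctly to the single stage $\int^\oplus_{X} \mathcal{H}_{\alpha, p} \, \dmu$. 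Both pieces are routine given the framework already set up: the stage $\int^\oplus_{X} \mathcal{H}_{\alpha, p} \, \dmu$ is a reducing subspace for every locally bounded operator on $\dilX \mathcal{D}_p \, \dmu$ (Equation~\eqref{eqn; locally bounded operator on D}), so restriction is a $\ast$-homomorphism and preserves the relation $TS_g = S_g T$. Once this is in place, the classical Theorem~\ref{thm;DeDibo vNA} closes the argument with no further effort.
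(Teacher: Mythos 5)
Your proposal is correct and follows essentially the same route as the paper's proof: for each $g \in \text{L}^\infty(X,\mu)$ you form the diagonalizable locally bounded operator $\dilX g(p)\cdot \mathrm{Id}_{\mathcal{D}_p}\,\dmu$, use the commutant hypothesis, restrict the commutation relation to the reducing subspace $\int^\oplus_X \mathcal{H}_{\alpha,p}\,\dmu$, and invoke Theorem \ref{thm;DeDibo vNA}. The paper does exactly this, only with slightly less explicit commentary on why the restriction step is legitimate.
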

\begin{proof}
Let $T \in \left ( C^\ast_{\mathcal{E}, \text{DIAG}} \left(\displaystyle \dilX \mathcal{D}_p \, \dmu \right) \right )^ \prime$. By following Equation \eqref{eqn; commutant},  $T$ is a locally bounded operator and so, the restriction $T_\alpha = T\big |_{\int^\oplus_{X} \mathcal{H}_{\alpha, p} \, \dmu}$ is a bounded operator  for each  $\alpha \in \Lambda$.   Now we show that $T_\alpha$ is decomposable. Let $f_\alpha \in \text{L}^\infty(X, \mu)$ and $u \in \int^\oplus_{X} \mathcal{H}_{\alpha,p}  \, \dmu$. Then 
\begin{align*}
\left (T_\alpha \left (  \int^\oplus_{X} f_\alpha(p) \cdot \mathrm{Id}_{\mathcal{H}_{\alpha,p}}  \, \dmu \right ) \right ) u  &= \left ( T  \left (  \int^{\oplus_\text{loc}}_{X} f_\alpha(p) \cdot \mathrm{Id}_{\mathcal{H}_{\alpha,p}}  \, \dmu \right ) \right ) u \\
&=  \left ( \left ( \int^{\oplus_\text{loc}}_{X} f_\alpha(p) \cdot \mathrm{Id}_{\mathcal{H}_{\alpha,p}}  \, \dmu \right ) T \right ) u \\
&= \left ( \left (  \int^\oplus_{X} f_\alpha(p) \cdot \mathrm{Id}_{\mathcal{H}_{\alpha,p}}  \, \dmu \right ) T_\alpha \right ) u.
\end{align*}
Here,  the function  $f_\alpha \in \text{L}^\infty(X, \mu)$  and the element $u \in \int^\oplus_{X} \mathcal{H}_{\alpha,p}  \, \dmu$ were chosen arbitrarily. This shows that $T_\alpha$ is in the commutant of the class of all diagonalizable bounded operators on $\int^\oplus_{X} \mathcal{H}_{\alpha, p} \, \dmu$. Thus by Theorem \ref{thm;DeDibo vNA},  $T_\alpha$ is decomposable, for every $\alpha \in \Lambda$.
\end{proof}

\begin{theorem} \label{thm; M DEC = M DIAG Commutant}
Let $\big \{ \mathcal{H}_p; \mathcal{E}_p = \{\mathcal{H}_{\alpha, p}\}_{\alpha \in \Lambda}; \mathcal{D}_p \big \}_{p \in X}$  be a family of quantized domains.  Suppose either $\Lambda$ is a countable set or  $\mu$ is a counting measure on $X$, then 
\begin{equation*}
C^\ast_{\mathcal{E}, \text{DEC}} \left(\displaystyle \dilX \mathcal{D}_p \, \dmu \right) = \left (C^\ast_{\mathcal{E}, \text{DIAG}} \left(\displaystyle \dilX \mathcal{D}_p \, \dmu \right)\right)^\prime.
\end{equation*}
\end{theorem}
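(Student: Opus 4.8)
The plan is to prove only the inclusion $\left(C^\ast_{\mathcal{E}, \text{DIAG}}\left(\dilX \mathcal{D}_p \, \dmu\right)\right)^\prime \subseteq C^\ast_{\mathcal{E}, \text{DEC}}\left(\dilX \mathcal{D}_p \, \dmu\right)$, since the reverse inclusion is already contained in Remark \ref{rem; M Dec and M Diag commutant}. So I would fix $T \in \left(C^\ast_{\mathcal{E}, \text{DIAG}}\left(\dilX \mathcal{D}_p \, \dmu\right)\right)^\prime$ and aim to produce a family $\{T_p \in C^\ast_{\mathcal{E}_p}(\mathcal{D}_p)\}_{p \in X}$ with $T = \dilX T_p \, \dmu$. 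The starting point is Lemma \ref{lem; V_alphaTV_alpha star is decomposable}: for each $\alpha \in \Lambda$ the restriction $T_\alpha := T\big|_{\int^\oplus_X \mathcal{H}_{\alpha,p}\,\dmu}$ is a decomposable bounded operator, hence $T_\alpha = \int^\oplus_X T_{\alpha,p}\,\dmu$ for some measurable family $\{T_{\alpha,p} \in \mathcal{B}(\mathcal{H}_{\alpha,p})\}_{p\in X}$.

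The first real step is to establish \emph{fibrewise coherence} of these families. Since $T$ is a locally bounded operator on $\dilX \mathcal{D}_p\,\dmu$, Equation \eqref{eqn; locally bounded operator on D} applied to the quantized domain furnished by Proposition \ref{prop; QD} shows that, for $\alpha \leq \beta$, the subspace $\int^\oplus_X \mathcal{H}_{\alpha,p}\,\dmu$ is reducing for the bounded operator $T_\beta$ on $\int^\oplus_X \mathcal{H}_{\beta,p}\,\dmu$, and by coherence of restrictions (Remark \ref{rem;lbo}) $T_\beta\big|_{\int^\oplus_X \mathcal{H}_{\alpha,p}\,\dmu} = T_\alpha$. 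The orthogonal projection of $\int^\oplus_X \mathcal{H}_{\beta,p}\,\dmu$ onto $\int^\oplus_X \mathcal{H}_{\alpha,p}\,\dmu$ is the decomposable operator $\int^\oplus_X Q^{\beta}_{\alpha,p}\,\dmu$, where $Q^{\beta}_{\alpha,p} : \mathcal{H}_{\beta,p}\to\mathcal{H}_{\alpha,p}$ is the canonical projection; since $T_\beta$ is decomposable and commutes with this projection, comparing fibres yields that $\mathcal{H}_{\alpha,p}$ reduces $T_{\beta,p}$ for $\mu$-a.e.\ $p$, and then $T_\beta\big|_{\int^\oplus_X \mathcal{H}_{\alpha,p}\,\dmu} = T_\alpha$ forces $T_{\beta,p}\big|_{\mathcal{H}_{\alpha,p}} = T_{\alpha,p}$ for $\mu$-a.e.\ $p$. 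Let $N_{\alpha,\beta}$ be the corresponding null set.

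Next comes the step where the hypothesis enters, exactly as in the proof of Theorem \ref{thm;DEC and DIAG LvNA}: put $N := \bigcup_{\alpha\leq\beta} N_{\alpha,\beta}$. If $\Lambda$ is countable, then $N$ is a countable union of null sets, so $\mu(N)=0$; if $\mu$ is a counting measure, then each $N_{\alpha,\beta}=\emptyset$, so $N=\emptyset$. In either case one discards $N$ and relabels $X\setminus N$ as $X$, so that now for every $p\in X$ and all $\alpha\leq\beta$ in $\Lambda$ the space $\mathcal{H}_{\alpha,p}$ reduces $T_{\beta,p}$ and $T_{\beta,p}\big|_{\mathcal{H}_{\alpha,p}} = T_{\alpha,p}$ (and, taking adjoints, $T_{\beta,p}^\ast\big|_{\mathcal{H}_{\alpha,p}} = T_{\alpha,p}^\ast$). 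Then for each fixed $p\in X$ the net $\{T_{\alpha,p}\}_{\alpha\in\Lambda}$ satisfies the hypotheses of Equation \eqref{eqn; locally bounded operator on D} (equivalently Equation \eqref{eq; inverese limit of bounded operators}), so $T_p := \varprojlim_{\alpha\in\Lambda} T_{\alpha,p}$ is a well-defined operator in $C^\ast_{\mathcal{E}_p}(\mathcal{D}_p)$ with $T_p\big|_{\mathcal{H}_{\alpha,p}} = T_{\alpha,p}$ for every $\alpha$.

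Finally I would verify $T = \dilX T_p\,\dmu$: given $u \in \dilX \mathcal{D}_p\,\dmu = \bigcup_\alpha \int^\oplus_X \mathcal{H}_{\alpha,p}\,\dmu$, choose $\alpha$ with $u \in \int^\oplus_X \mathcal{H}_{\alpha,p}\,\dmu$; then $u(p)\in\mathcal{H}_{\alpha,p}$ for a.e.\ $p$ and $(Tu)(p) = (T_\alpha u)(p) = T_{\alpha,p}u(p) = T_p u(p)$ for a.e.\ $p$, so $T$ is decomposable. Together with Remark \ref{rem; M Dec and M Diag commutant} this gives the asserted equality. The only genuine obstacle is the passage from "a.e.\ coherence for each pair $\alpha\leq\beta$" to "coherence at every point of one fixed co-null set simultaneously" — this is precisely what forces the countability of $\Lambda$ (or the use of the counting measure); once that is in hand, the remainder is routine bookkeeping with decomposable operators and projective limits already prepared in the preceding observations and lemmas.
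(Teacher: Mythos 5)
Your proposal is correct and follows essentially the same route as the paper's proof: restrict $T$ to each $\int^\oplus_X \mathcal{H}_{\alpha,p}\,\dmu$, invoke Lemma \ref{lem; V_alphaTV_alpha star is decomposable} to get fibre families, use local boundedness to obtain a.e.\ coherence $T_{\beta,p}\big|_{\mathcal{H}_{\alpha,p}} = T_{\alpha,p}$ for $\alpha \leq \beta$, use countability of $\Lambda$ (or the counting measure) to absorb all exceptional null sets into one, define $T_p = \varprojlim_{\alpha} T_{\alpha,p}$, and verify $T = \dilX T_p\,\dmu$. Your explicit treatment of the fibrewise reducing property (and adjoints), needed to place $T_p$ in $C^\ast_{\mathcal{E}_p}(\mathcal{D}_p)$, is a point the paper leaves implicit, but the argument is the same.
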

\begin{proof}
We have $C^\ast_{\mathcal{E}, \text{DEC}} \left(\displaystyle \dilX \mathcal{D}_p \, \dmu \right) \subseteq \left (C^\ast_{\mathcal{E}, \text{DIAG}} \left(\displaystyle \dilX \mathcal{D}_p \, \dmu \right) \right)^\prime$ (see Remark \ref{rem; M Dec and M Diag commutant}). It is enough to show the other containment. Let $T \in \left (C^\ast_{\mathcal{E}, \text{DIAG}} \left(\displaystyle \dilX \mathcal{D}_p \, \dmu \right) \right)^\prime$. Then it follows from  Lemma \ref{lem; V_alphaTV_alpha star is decomposable}  that the restriction  $T_\alpha = T\big |_{\int^\oplus_{X} \mathcal{H}_{\alpha, p} \, \dmu}$ is a bounded decomposable operator for each $\alpha \in \Lambda$. Thus for each fixed $\alpha \in \Lambda$, by following  \ref{def;Decbo} of Definition \ref{def;Debo} we get a family $\big \{ S_{\alpha, p} \in \mathcal{B} \big ( \mathcal{H}_{\alpha, p} \big ) \big \}_{p \in X}$ of bounded operators such that 
\begin{equation} \label{eqn; T alpha}
T_\alpha = \int^\oplus_{X} S_{\alpha, p} \, \dmu.
\end{equation}
Since $T$ is locally bounded, $T_\beta \big |_{\int^\oplus_{X} \mathcal{H}_{\alpha, p} \, \dmu} = T_\alpha$, whenever $\alpha \leq \beta$. So, in view of Equation \eqref{eqn; T alpha}, we have $S_{\beta, p} \big |_{\mathcal{H}_{\alpha, p}} = S_{\alpha, p}$ for a.e. $p \in X$. That is, there exists a measurable set, say $E_{\alpha, \beta} \subseteq X$ such that $\mu(E_{\alpha, \beta}) = 0$ and $S_{\beta, p} \big |_{\mathcal{H}_{\alpha, p}} \neq S_{\alpha, p}$ for every $p \in E_{\alpha, \beta}$. Let $E_\alpha := \bigcup\limits_{\beta \in \Lambda, \alpha \leq \beta} E_{\alpha, \beta}$ and $E :=  \bigcup\limits_{\alpha \in \Lambda} E_{\alpha}$. 

Now assume that $\Lambda$ is countable, then $E_\alpha$ is a measurable set for each $\alpha \in \Lambda$. This implies that $E$ is also a measurable set, and, moreover, $\mu(E) = 0$. On the other hand, if $\mu$ is a counting measure on $X$, then for any $\alpha \leq \beta$, the set $E_{\alpha, \beta} = \emptyset$ and thus $E = \emptyset$. So in both cases (that is, when $\Lambda$ is countable or $\mu$ is a counting measure on $X$), we consider the set $X \setminus E$ and without loss of generality we again denote it by $X$. It follows that $S_{\beta, p} \big |_{\mathcal{H}_{\alpha, p}} = S_{\alpha, p}$ for all $p \in X$, whenever $\alpha \leq \beta$. This yields a locally bounded operator $T_p : \mathcal{D}_p \rightarrow \mathcal{D}_p$ (see Equation \eqref{eq; inverese limit of bounded operators}) defined as 
\begin{equation} \label{eqn; Tp}
T_p := \varprojlim\limits_{\alpha \in \Lambda} S_{\alpha, p}, \; \; \; \text{for each} \; \; p \in X.  
\end{equation}
Now we show that $T = \displaystyle \dilX T_p \, \dmu$. Let $u \in \displaystyle \dilX \mathcal{D}_p \, \dmu$. Then by Definition \ref{Defn: directint_loc},  $u \in \int^\oplus_{X} \mathcal{H}_{\alpha, p} \, \dmu$ for some $\alpha \in \Lambda$. Since $T$ is locally bounded, $Tu \in \int^\oplus_{X} \mathcal{H}_{\alpha, p} \, \dmu$ and it follows from Equations \eqref{eqn; T alpha} and \eqref{eqn; Tp} that 
\begin{equation*}
\big (Tu \big )(p) = \big (T_\alpha u \big )(p) = S_{\alpha, p} u(p) =  T_p u(p) \; \; \; \text{for a.e.} \; \; p \in X.  
\end{equation*}
Since $u \in \displaystyle \dilX \mathcal{D}_p \, \dmu$ was chosen arbitrarily, by following  \ref{def;Dec(lbo)} of Definition \ref{def;DecDiag(lbo)}, we obtain 
$$T = \displaystyle \dilX T_p \, \dmu.$$ Thus $T \in C^\ast_{\mathcal{E}, \text{DEC}} \left(\displaystyle \dilX \mathcal{D}_p \, \dmu \right)$.  This proves the result. 
\end{proof}

\subsection*{Acknowledgment}
The first named author kindly acknowledges the financial support received as an Institute postdoctoral fellowship from the Indian Institute of Science Education and Research Mohali. The first named author also sincerely thanks his Ph.D. thesis supervisor for introducing him to the topic of direct integral and disintegration of Hilbert spaces. The second named author would like to thank SERB (India) for a financial support in the form of a Startup Research Grant (File No. SRG/2022/001795). The authors express their sincere thanks to DST for a financial support in the form of the FIST grant (File No. SR/FST/MS-I/2019/46(C)) and the Department of Mathematical Sciences, IISER Mohali, for providing the necessary facilities to carry out this work. 

\subsection*{Declaration} The authors declare that there are no conflicts of interest.

\end{document}